\pgfplotsset{compat=1.18}
\renewcommand{\@Opargbegintheorem}[4]{%
  #4\trivlist\item[\hskip\labelsep{#3#2\@thmcounterend}]}
\DeclareDocumentCommand\R{}{\mathbb{R}}
\DeclareDocumentCommand\Rnonneg{}{\R_{\geq 0}}
\DeclareDocumentCommand\Z{}{\mathbb{Z}}
\DeclareDocumentCommand\N{}{\mathbb{N}}
\DeclareDocumentCommand\zerovec{o}{\IfNoValueTF{#1}{\mathbb{O}}{\mathbb{O}_{#1}}}
\DeclareDocumentCommand\onevec{o}{\IfNoValueTF{#1}{\mathbbm{1}}{\mathbbm{1}_{#1}}}
\DeclareDocumentCommand\conv{o}{\operatorname{conv}\IfValueTF{#1}{\left(#1\right)}{}}
\DeclareDocumentCommand\cl{o}{\operatorname{cl}\IfValueTF{#1}{\left(#1\right)}{}}
\DeclareDocumentCommand\aff{o}{\operatorname{aff}\IfValueTF{#1}{\left(#1\right)}{}}
\DeclareDocumentCommand\relint{o}{\operatorname{relint}\IfValueTF{#1}{\left(#1\right)}{}}
\DeclareDocumentCommand\spn{o}{\operatorname{span}\IfValueTF{#1}{\left(#1\right)}{}}
\DeclareDocumentCommand\proj{oo}{\IfValueTF{#1}{\operatorname{proj}{}_{#1}}{%
  \operatorname{proj}{}}\IfValueTF{#2}{\left(#2\right)}{}}
\DeclareDocumentCommand\transpose{m}{#1^{\intercal}}
\DeclareDocumentCommand\RLTstrong{}{\ensuremath{\widehat{\mathcal{R}}}}
\DeclareDocumentCommand\RLTweak{}{\ensuremath{\mathcal{R}}}
\DeclareDocumentCommand\RLTstrongEF{}{\ensuremath{\widehat{\mathcal{R}}
^{\textup{ext}}}}
\DeclareDocumentCommand\RLTweakEF{}{\ensuremath{\mathcal{R}
^{\textup{ext}}}}
\DeclareDocumentCommand\LandP{}{\ensuremath{\mathcal{L}}}
\title{Geometry of the Reformulation-Linearization-Technique: Domination of Disjunctions}
\titlerunning{Geometry of RLT: Domination of Disjunctions}
\author{Hugo A.~Hof\inst{1}\orcidID{0009-0009-7953-1763} \and
Matthias Walter\inst{1}\orcidID{0000-0002-6615-5983}}
\authorrunning{H.A.~Hof and M.~Walter}
\institute{%
  University of Twente, Enschede, The Netherlands\\
  \email{\{h.a.hof,m.walter\}@utwente.nl}
}
\begin{document}

\maketitle

\begin{abstract}
  The reformulation-linearization-technique (RLT) is a well-known strengthening technique for binary mixed-integer optimization.
  It is well known to dominate lift-and-project strengthening, which is based on disjunctive programming (DP) for single-variable disjunctions.
  In contrast to the latter, the geometry of RLT is not understood completely.
  We provide some insights by characterizing the points in the corresponding \emph{RLT closure} geometrically.
  We exploit this insight to show that RLT even dominates DP approaches based on cardinality equations with right-hand side $1$.
  This is in contrast to cardinality inequalities with right-hand side $1$, whose DPs are not dominated.
  Our results have applications in the strength comparison for the quadratic assignment problem.

  \keywords{reformulation-linearization-technique \and disjunctive programming \and lift-and-project \and mixed-integer programming \and quadratic assignment problem}

\end{abstract}

\section{Introduction}
\label{sec_intro}

Since its invention by Adams and Sherali~\cite{AdamsS86,AdamsS90}, the \emph{reformulation-linearization-technique (RLT)} is among the state-of-the-art (re)formulation techniques in mixed-integer (nonlinear) optimization.
It turned out to be important in particular for quadratic versions of combinatorial optimization problems such as the \emph{quadratic assignment problem}~\cite{FriezeY83,AdamsJ94} and the \emph{quadratic knapsack problem}~\cite{ForresterAH10}, as well as in polynomial optimization in general~\cite{QiuY24,SheraliT92}.
In this paper we consider binary mixed-integer programs, which are given by a polyhedron $P \subseteq [0,1]^N$ (for some index set $N$) that is defined by inequalities $Ax \leq b$, and a subset $B \subseteq N$ of indices whose variables are restricted to be binary.
In this setting, RLT entails the multiplication of the inequalities with each variable $x_j$ and its complement $1-x_j$, followed by a substitution of products $x_i \cdot x_j$ by auxiliary variables $y_{i,j}$, and finally the identification of $y_{j,j}$ with $x_j$ for all binary variables $x_j$ due to $x_j^2 = x_j$.
By multiplying with all combinations of $k \in \N$ such terms, this approach was generalized to higher \emph{levels} of the \emph{RLT hierarchy}~\cite{SheraliA99}.
Recently, a partial application of this procedure has been successfully implemented in general-purpose mixed-integer programming solvers~\cite{BestuzhevaGA23} in order to improve performance in presence of bilinear product relations.

RLT is closely related to other strengthening techniques such as \emph{disjunctive programming}.
Here, one considers polyhedra $P_1,P_2,\dotsc,P_k \subseteq P$ that cover all feasible solutions of the optimization problem.
A celebrated result by Balas~\cite{Balas74,Balas79} establishes a description of the convex hull $\conv(P_1 \cup P_2 \cup \dotsb \cup P_k)$ of their union by means of an \emph{extended formulation}, that is, using additional variables.
As a special case, the strengthening of $P$ by considering so-called \emph{lift-and-project} inequalities, which are inequalities valid for $\conv \big(P^{(j)}_0 \cup P^{(j)}_1 \big)$ for $P^{(j)}_\beta \coloneqq \{ x \in P \mid x_j = \beta \}$ was proposed by Balas, Ceria and Cornu{\'e}jols~\cite{BalasCC93,BalasCC96}.
It is known that all such inequalities are implied by RLT~\cite{AdamsS15}.
The strengthening obtained by lift-and-project is geometrically well understood.
However, this is not the case for RLT, and our paper contributes to this understanding.

\paragraph{Contribution and outline.}
We start by formally introducing RLT in two variants (with and without multiplication of continuous variables), disjunctive programming and lift-and-project as well as their extended formulations in \cref{sec_strengthen}.

In \cref{sec_geometry} we recap some well-known geometric properties of RLT, from which we derive a characterization that gives new geometric insights.
The characterization describes points in the feasible region as convex combinations of points on the faces of the unit cube, together with an additional constraint.

Then, in \cref{sec_qap} we consider the \emph{quadratic assignment problem} (QAP) first studied in~\cite{KoopmansB57}. 
We prove the dominance of the well-known Adams-Johnson linearization~\cite{AdamsJ94}, which can be derived via RLT, over a disjunctive programming approach that is more powerful than lift-and-project.
As a corollary, we obtain dominance over several well-known linearizations, which has, to the best of our knowledge, only been settled~\cite{HuberR17} for the Xia-Yuan linearization~\cite{XiaY06}.

Our QAP-specific observations are generalized in \cref{sec_dominance}, where we characterize which disjunctive programming approaches are generally dominated by RLT.
This characterization describes for all disjunctions based on enumeration of binary solutions of a subset of variables, whether RLT dominates the disjunctive programming approach for such a disjunction.

We conclude our paper with several open problems, in particular to complete the geometric understanding of RLT beyond our contribution.

\section{Strengthening techniques}
\label{sec_strengthen}

Consider a polyhedron $P \subseteq [0,1]^N$ defined by inequalities $Ax \leq b$ and an index set $B \subseteq N$ of binary variables.
The \emph{reformulation linearization technique (RLT)} consists in multiplying constraints with variables or their complements, replacing products $x_i \cdot x_j$ (for $i,j \in N$) by variables $y_{i,j}$, and substituting $y_{j,j} = x_j$ for $j \in B$.
Traditionally, one multiplies all constraints with all binary variables $x_j$ and with their complements $1-x_j$ for all $j \in B$, only.
We define the corresponding extended formulation $\RLTweakEF_B(P)$ as the set of vectors $(x,y) \in \R^N \times \R^{N \times B}$ that satisfy 
\begin{subequations}
  \label{eq_rlt_weak}
  \begin{alignat}{7}
    A y_{\star,j} &\leq b x_j  &\quad& \forall j \in B, \label{eq_rlt_weak_first} \\
    A (x - y_{\star,j}) &\leq b (1 - x_j)  &\quad& \forall j \in B, \label{eq_rlt_weak_second} \\
    y_{i,j} &= y_{j,i} &\quad& \forall i,j \in B, \label{eq_rlt_weak_symmetry} \\ 
    y_{j,j} &= x_j &\quad& \forall j \in B. \label{eq_rlt_weak_diag}
  \end{alignat}
\end{subequations}
Note that by $y_{\star,j}$ we denote the vector with entries $y_{i,j}$ for all $i$.
If we additionally multiply with $x_j$ and $1-x_j$ for $j \in N \setminus B$, we obtain the set $\RLTstrongEF_B(P)$ of vectors $(x,y) \in \R^N \times \R^{N \times N}$ that satisfy
\begin{subequations}
  \label{eq_rlt_strong}
  \begin{alignat}{7}
    A y_{\star,j} &\leq b x_j  &\quad& \forall j \in N, \label{eq_rlt_first} \\
    A (x - y_{\star,j}) &\leq b (1 - x_j)  &\quad& \forall j \in N, \label{eq_rlt_second} \\
    y_{i,j} &= y_{j,i} &\quad& \forall i,j \in N, \label{eq_rlt_symmetry} \\ 
    y_{j,j} &= x_j &\quad& \forall j \in B. \label{eq_rlt_diag}
  \end{alignat}
\end{subequations}
The corresponding \emph{RLT closures} (with respect to $B$), $\RLTstrong_B(P)$ and $\RLTweak_B(P)$ are defined as the projections of $\RLTstrongEF_B(P)$ and $\RLTweakEF_B(P)$, respectively.
Our notation suggests that these polyhedra are independent of the inequality description of $P$, which we confirm later in \cref{thm_rlt_elementary}.
Note that in some papers, additional \emph{McCormick inequalities}~\cite{Fortet60a,Fortet60b,GloverW73,GloverW74,McCormick76} are part of the description.
In the same proposition we will show that these are implied due to our assumption that $P$ lies in $[0,1]^N$.

\paragraph{Disjunctive programming.}
In line with~\cite{Balas74,Balas79} we define a \emph{disjunctive constraint} as a logical OR
\begin{equation}
  \bigvee\limits_{h \in H} (C^{(h)} x \leq d^{(h)}) \label{eq_disjunctive_cons}
\end{equation}
of constraint sets $C^{(h)} x \leq d^{(h)}$ for some finite index set $H$.
The corresponding \emph{disjunctive hull for a polyhedron $P$} is
\begin{equation}
  \cl \conv \Big( \bigcup_{h \in H} \big\{ x \in P \mid C^{(h)}x \leq d^{(h)} \big\} \Big), \label{eq_disjunctive_hull}
\end{equation}
where $\cl(\cdot)$ denotes the \emph{topological closure}.
By slightly abusing notation we also allow equality constraints as in the \emph{(binary) variable disjunction} $(x_j = 0) \lor (x_j = 1)$.
Note that all our considered polyhedra will be polytopes, which allows us to omit the topological closure in~\eqref{eq_disjunctive_hull}~\cite{Balas74}.
A well-known result due to Balas establishes the \emph{corresponding extended formulation} for~\eqref{eq_disjunctive_hull} with additional variables $x^{(h)} \in \R^N$ and $\lambda^{(h)} \in \R$ for all $h \in H$ and constraints
\begin{subequations}
  \label{model_disjunctive_ef}
  \begin{alignat}{7}
    A x^{(h)} & \leq b \lambda^{(h)} &\quad& \forall h \in H, \label{eq_dp_ext_first}\\
    C^{h} x^{(h)} &\leq d^{(h)} \lambda^{(h)} &\quad& \forall h \in H, \label{eq_dp_ext_second}\\
    \sum_{h \in H} \lambda^{(h)} & = 1, \label{eq_dp_ext_third}\\
    \sum_{h \in H} x^{(h)} &= x. \label{eq_dp_ext_fourth}
  \end{alignat}  
\end{subequations}

\begin{proposition}[Balas~\cite{Balas74}]
  The disjunctive hull~\eqref{eq_disjunctive_hull} for a disjunctive constraint~\eqref{eq_disjunctive_cons} is equal to the projection of the polyhedron defined by~\eqref{model_disjunctive_ef} onto the $x$-variables.
\end{proposition}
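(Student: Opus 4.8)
The plan is to prove the two inclusions between the disjunctive hull $D \coloneqq \cl\conv\big(\bigcup_{h \in H} P_h\big)$, where $P_h \coloneqq \{x \in P \mid C^{(h)} x \leq d^{(h)}\}$, and the projection $Q$ of the polyhedron defined by~\eqref{model_disjunctive_ef} onto the $x$-variables. The first fact I would record is that $Q$ is itself a polyhedron, since projections of polyhedra are polyhedra (Fourier--Motzkin elimination); in particular $Q$ is closed and convex. This closedness is exactly what allows the extended formulation to capture the \emph{closure} of the convex hull rather than merely the convex hull.

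Second, I would establish $D \subseteq Q$. It suffices to show $\bigcup_h P_h \subseteq Q$ and then invoke convexity and closedness of $Q$. Given $z \in P_{h_0}$, I set $\lambda^{(h_0)} = 1$, $x^{(h_0)} = z$, and $\lambda^{(h)} = 0$, $x^{(h)} = \zerovec$ for $h \neq h_0$; feasibility of~\eqref{eq_dp_ext_first}--\eqref{eq_dp_ext_fourth} is then immediate from $A z \leq b$ and $C^{(h_0)} z \leq d^{(h_0)}$, and this point projects to $z$. Convexity of $Q$ upgrades this to $\conv(\bigcup_h P_h) \subseteq Q$, and closedness of $Q$ to $D \subseteq Q$.

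Third, and conversely, I would show $Q \subseteq D$. Take $x$ with witnesses $(x^{(h)}, \lambda^{(h)})_{h \in H}$, where $\lambda^{(h)} \geq 0$ and $\sum_h \lambda^{(h)} = 1$. For each $h$ with $\lambda^{(h)} > 0$, the scaled point $z_h \coloneqq x^{(h)}/\lambda^{(h)}$ satisfies $A z_h \leq b$ and $C^{(h)} z_h \leq d^{(h)}$, hence $z_h \in P_h$. For each $h$ with $\lambda^{(h)} = 0$, constraints~\eqref{eq_dp_ext_first}--\eqref{eq_dp_ext_second} reduce to $A x^{(h)} \leq \zerovec$ and $C^{(h)} x^{(h)} \leq \zerovec$, so $x^{(h)}$ lies in the recession cone of $P_h$. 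Writing $x = \sum_{h : \lambda^{(h)} > 0} \lambda^{(h)} z_h + \sum_{h : \lambda^{(h)} = 0} x^{(h)}$ then exhibits $x$ as a convex combination of points of $\bigcup_h P_h$ plus recession directions, which places it in $D$.

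The main obstacle is precisely the bookkeeping of the $\lambda^{(h)} = 0$ terms and the role of the closure. When some $\lambda^{(h)} = 0$ but $x^{(h)} \neq \zerovec$, the point $x$ need not lie in $\conv(\bigcup_h P_h)$ itself, only in its closure; making ``convex combination plus recession directions lies in $D$'' rigorous requires a limiting argument (absorbing the recession directions into a base point of some $P_{h_+}$ with $\lambda^{(h_+)} > 0$ and scaling), together with the standard fact that $\cl\conv$ of finitely many polyhedra is generated by their points and recession cones. The degenerate case where some $P_h = \emptyset$ must also be handled, which a Farkas argument shows forces $\lambda^{(h)} = 0$. I would stress, however, that in all our applications the sets $P_h$ are polytopes, so every recession cone is trivial; then the $\lambda^{(h)} = 0$ terms vanish, the closure is superfluous, and the proof collapses to the elementary two-line scaling computation.
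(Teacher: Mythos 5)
The paper does not actually prove this proposition: it is imported from Balas~\cite{Balas74} as a black box, so there is no in-paper argument to compare routes with, and I can only judge your proof on its own merits. Your two-inclusion scaling argument is the standard textbook proof, and in the setting the paper works in --- $P \subseteq [0,1]^N$, so that $P$ and every disjunct $P_h \coloneqq \{ x \in P \mid C^{(h)}x \leq d^{(h)} \}$ is a polytope --- it is correct and complete: for nonempty $P$ we have $\{ d \mid Ad \leq \zerovec \} = \{\zerovec\}$, hence $\lambda^{(h)} = 0$ forces $x^{(h)} = \zerovec$ in \eqref{eq_dp_ext_first}, all recession and closure bookkeeping disappears, and the two scaling computations suffice, exactly as your final paragraph says.

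Two concrete warnings, though. First, your middle paragraph overclaims for general polyhedra: the $\lambda^{(h)} = 0$ terms are not ``mere bookkeeping plus a limiting argument'', because when some $P_h = \emptyset$ while the homogenized cone $\{ x \mid Ax \leq \zerovec,~ C^{(h)}x \leq \zerovec \}$ is nontrivial, the statement itself fails. Take $n = 1$, $A = 0$, $b = 0$ (so $P = \R$), disjunct $1$ given by $0 \cdot x \leq -1$ (so $P_1 = \emptyset$) and disjunct $2$ given by $x \leq 0$, $-x \leq 0$ (so $P_2 = \{0\}$): then $\cl\conv(P_1 \cup P_2) = \{0\}$, yet the projection of \eqref{model_disjunctive_ef} is all of $\R$, because your Farkas argument does force $\lambda^{(1)} = 0$ but leaves $x^{(1)}$ completely free, and this nonzero $x^{(1)}$ survives in \eqref{eq_dp_ext_fourth} without being a recession direction of any nonempty piece. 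So handling empty disjuncts is where Balas' general theorem genuinely needs hypotheses; your argument is rescued only by the paper's boundedness assumption. Second, as printed, \eqref{model_disjunctive_ef} contains no constraint $\lambda^{(h)} \geq 0$, which your converse inclusion silently assumes. This is presumably an omission in the paper (Balas' formulation includes it), but if one reads \eqref{model_disjunctive_ef} literally, your case split $\lambda^{(h)} > 0$ versus $\lambda^{(h)} = 0$ is incomplete: one must exclude $\lambda^{(h)} < 0$, e.g., by noting that $Ax^{(h)} \leq b\lambda^{(h)}$ with $\lambda^{(h)} < 0$ yields a point $z = x^{(h)}/\lambda^{(h)}$ with $Az \geq b$, which together with boundedness and nonemptiness of $P$ forces $P$ to be a singleton --- a degenerate case that then has to be checked separately.
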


\paragraph{Lift-and-project.}
The \emph{lift-and-project closure of $P$} with respect to binary variables $B \subseteq N$ is defined as the intersection of the disjunctive hulls for variable disjunctions for all $j \in B$, that is,
\begin{equation*}
  \LandP_B(P) \coloneqq \bigcap_{j \in B} \conv \Big( \big\{ x \in P \mid x_j = 0 \big\} \cup \big\{ x \in P \mid x_j = 1 \big\} \Big).
\end{equation*}

 We conclude this section with a well-known result for the relationship between $\RLTweak_B(P)$ and $\LandP_B(P)$ that, for example, is shown in~\cite{AdamsS15}, and whose proof we provide for completeness.

 \begin{proposition}
  \label{thm_rlt_lap_dominance}
  For polyhedra $P\subseteq[0,1]^N$ and $B \subseteq N$, $\RLTweak_B(P) \subseteq \LandP_B(P)$ holds.
\end{proposition}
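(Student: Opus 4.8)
The plan is to show that $x \in \LandP_B(P)$ holds for every $x \in \RLTweak_B(P)$ by verifying, separately for each $j \in B$, membership in the disjunctive hull whose intersection defines $\LandP_B(P)$. Fix $j \in B$ and let $(x,y) \in \RLTweakEF_B(P)$ be a lift witnessing $x \in \RLTweak_B(P)$. By Balas's theorem (the proposition stated above), it suffices to exhibit a feasible point of the extended formulation~\eqref{model_disjunctive_ef} for the variable disjunction $(x_j = 0) \lor (x_j = 1)$ that projects onto $x$.

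First I would read off the candidate point directly from the $j$-th column $y_{\star,j}$: set $x^{(1)} \coloneqq y_{\star,j}$, $\lambda^{(1)} \coloneqq x_j$, $x^{(0)} \coloneqq x - y_{\star,j}$ and $\lambda^{(0)} \coloneqq 1 - x_j$. With this choice, constraints~\eqref{eq_dp_ext_first} for the two branches are precisely the RLT inequalities~\eqref{eq_rlt_weak_first} and~\eqref{eq_rlt_weak_second}, while~\eqref{eq_dp_ext_third} and~\eqref{eq_dp_ext_fourth} hold by telescoping, since $\lambda^{(0)} + \lambda^{(1)} = (1 - x_j) + x_j = 1$ and $x^{(0)} + x^{(1)} = x$.

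The only remaining conditions are the scaled disjunction constraints~\eqref{eq_dp_ext_second}, which for this variable disjunction read $x^{(1)}_j = \lambda^{(1)}$ and $x^{(0)}_j = 0$. This is the one place where the construction is not purely formal: both equalities follow from the diagonal identification~\eqref{eq_rlt_weak_diag}, because $x^{(1)}_j = y_{j,j} = x_j = \lambda^{(1)}$ and hence $x^{(0)}_j = x_j - y_{j,j} = 0$. I expect this step---recognizing that the substitution $y_{j,j} = x_j$ is exactly what certifies the disjunction---to be the crux, since every other constraint is either a verbatim match with an RLT inequality or a trivial sum. Carrying out this verification for each $j \in B$ yields $x \in \conv[\{x \in P \mid x_j = 0\} \cup \{x \in P \mid x_j = 1\}]$ for all $j$, and intersecting over $j \in B$ gives $x \in \LandP_B(P)$, as desired.
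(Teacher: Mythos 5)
Your proof is correct and follows essentially the same route as the paper's own proof: the identical choice $x^{(1)} = y_{\star,j}$, $x^{(0)} = x - y_{\star,j}$, $\lambda^{(1)} = x_j$, $\lambda^{(0)} = 1 - x_j$, with~\eqref{eq_rlt_weak_first} and~\eqref{eq_rlt_weak_second} giving~\eqref{eq_dp_ext_first}, the diagonal identification~\eqref{eq_rlt_weak_diag} giving~\eqref{eq_dp_ext_second}, and the trivial sums giving~\eqref{eq_dp_ext_third} and~\eqref{eq_dp_ext_fourth}, followed by intersecting over $j \in B$. No gaps; nothing to change.
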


\begin{proof}
  Assume $P = \{ x \in \R^N \mid Ax \leq b \}$ for some matrix $A \in \R^{M \times N}$ and $b \in \R^M$.

  Consider a point $\hat{x} \in \RLTweak_B(P)$ and some $\hat{y} \in \R^{N \times B}$ such that~\eqref{eq_rlt_weak} holds. 
  Take some $j \in B$ and consider the disjunction $x_j = 0 \lor x_j = 1$, that is, with constraints $x_j = h$ for both $h \in H \coloneqq \{0,1\}$.
      
  Let $x^{(1)} = \hat{y}_{\star,j}$, $x^{(0)} = \hat{x} -\hat{y}_{\star,j}$, $\lambda^{(1)} = \hat{x}_j$ and $\lambda^{(0)} = 1-\hat{x}_j$. 
  Then from~\eqref{eq_rlt_weak_first} and~\eqref{eq_rlt_weak_second} we have $Ax^{(1)} \leq b\hat{x}_j = b \lambda^{(1)}$ and $Ax^{(0)} \leq b (1 - \hat{x}_j ) = b \lambda^{(0)}$ which means that~\eqref{eq_dp_ext_first} holds.
  From~\eqref{eq_rlt_weak_diag} we get $x^{(1)}_j = \hat{x}_j = 1 \cdot \lambda^{(j)}$ and $x^{(0)}_j = \hat{x}_j - \hat{x}_j = 0 \cdot \lambda^{(0)}$, so~\eqref{eq_dp_ext_second} holds.
  Lastly, \eqref{eq_dp_ext_third} and~\eqref{eq_dp_ext_fourth} follow by our definition of $x^{(h)}$ and $\lambda^{(h)}$. 
  Therefore, $\hat{x} \in \conv ( \{ x \in P \mid x_j = 0 \} \cup \{ x \in P \mid x_j = 1 \} )$.
  As this holds for an arbitrary $j$ it also holds for the intersection of all $j \in B$.
  \qed
\end{proof}
      
\section{Geometric Properties}
\label{sec_geometry}

We start with some elementary and well-known properties of $\RLTweak$ and $\RLTstrong$.

\begin{proposition}
  \label{thm_rlt_elementary}
  Let $P \subseteq [0,1]^N$ be a polyhedron and let $B \subseteq N$.
  The following elementary properties hold for $\RLTweak_B(P)$ and $\RLTstrong_B(P)$.
  \begin{enumerate}[label=(\roman*)]
  \item
    \label{thm_rlt_elementary_redundant}
    $\RLTweak_B(P)$ and $\RLTstrong_B(P)$ is independent of the inequality description of $P$.
    In particular, scaling inequalities and adding redundant inequalities does not affect $\RLTweak_B(P)$ or $\RLTstrong_B(P)$.
  \item 
    \label{thm_rlt_elementary_complement}
    For any $k \in N$, the map $\sigma^k : \R^N \to \R^N$ that complements variable $x_k$, 
    \begin{equation*}
      \sigma^k(x)_i = \begin{cases}
        1 - x_i & \text{ if } i = k \\ x_i & \text{ if } i \neq k
        \end{cases}
    \end{equation*}
    satisfies $\RLTweak_B(\sigma^k(P)) = \sigma^k(\RLTweak_B(P))$ and $\RLTstrong_B(\sigma^k(P)) = \sigma^k(\RLTstrong_B(P))$.
  \item
    \label{thm_rlt_elementary_mccormick}
    Any $y \in \R^{B \times N}$ that fulfills~\eqref{eq_rlt_weak} also satisfies the \emph{McCormick inequalities}
    \begin{subequations}
      \label{eq_mccormick}
      \begin{alignat}{7}
        y_{i,j} &\leq x_i &\quad& \forall (i,j) \in B \times N, \label{eq_mccormick_i} \\
        y_{i,j} &\leq x_j &\quad& \forall (i,j) \in B \times N, \label{eq_mccormick_j} \\
        y_{i,j} &\geq x_i + x_j - 1 &\quad& \forall (i,j) \in B \times N, \label{eq_mccormick_ij} \\
        y_{i,j} &\geq 0 &\quad& \forall (i,j) \in B \times N, \label{eq_mccormick_nonneg}
      \end{alignat}
    \end{subequations}
    while any $\hat{y} \in \R^{N \times N}$ that fulfills~\eqref{eq_rlt_strong} even satisfies~\eqref{eq_mccormick} for all $(i,j) \in N \times N$.
  \item
    \label{thm_rlt_elementary_subset}
    $\conv\{ x \in \Z^B \times \R^{N \setminus B} \mid x \in P \} \subseteq \RLTstrong_B(P) \subseteq \RLTweak_B(P) \subseteq P$.
  \item
    \label{thm_rlt_elementary_monotone}
    Monotonicity: $\RLTweak_B(P) \subseteq \RLTweak_B(Q)$ and $\RLTstrong_B(P) \subseteq \RLTstrong_B(Q)$ hold for $P \subseteq Q$.
  \item
    \label{thm_rlt_elementary_faces}
    Taking faces and applying $\RLTweak_B(\cdot)$ or $\RLTstrong_B(\cdot)$ commute, that is,
    \begin{subequations}
      \begin{alignat}{7}
        \RLTweak_B( \{ x \in P \mid \transpose{a}x = \beta \} )
      &= \{ x \in \RLTweak_B(P) \mid \transpose{a}x = \beta \}  \\
        \RLTstrong_B( \{ x \in P \mid \transpose{a}x = \beta \} )
      &= \{ x \in \RLTstrong_B(P) \mid \transpose{a}x = \beta \}
      \end{alignat}
      \label{eq_rlt_elementary_faces}
    \end{subequations}
    holds for any inequality $\transpose{a}x \leq \beta$ that is valid for $P$.
  \end{enumerate}
\end{proposition}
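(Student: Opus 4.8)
The plan is to prove the six items in the order \ref{thm_rlt_elementary_subset}, \ref{thm_rlt_elementary_mccormick}, \ref{thm_rlt_elementary_redundant}, \ref{thm_rlt_elementary_monotone}, \ref{thm_rlt_elementary_complement}, \ref{thm_rlt_elementary_faces}, since the later items reuse the earlier ones. The inclusions in \ref{thm_rlt_elementary_subset} are direct: summing \eqref{eq_rlt_weak_first} and \eqref{eq_rlt_weak_second} for a fixed $j \in B$ cancels $y_{\star,j}$ and leaves $Ax \le b$, so $\RLTweak_B(P) \subseteq P \subseteq [0,1]^N$ (the case $B = \emptyset$ is trivial); restricting a witness $y \in \R^{N\times N}$ to its $N\times B$ block shows $\RLTstrong_B(P) \subseteq \RLTweak_B(P)$, because \eqref{eq_rlt_weak} is exactly the $j \in B$ part of \eqref{eq_rlt_strong}; and each point $\hat x \in P$ with $\hat x_j \in \{0,1\}$ on $B$ lifts via $\hat y_{i,j} \coloneqq \hat x_i \hat x_j$, for which \eqref{eq_rlt_first}/\eqref{eq_rlt_second} reduce to $A\hat x \le b$ scaled by $\hat x_j \ge 0$ and $1-\hat x_j \ge 0$ while $\hat y_{j,j} = \hat x_j^2 = \hat x_j$ on $B$, so by convexity of the projection $\RLTstrong_B(P)$ the convex hull is contained. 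For \ref{thm_rlt_elementary_mccormick} I use that $x_i \le 1$ and $x_i \ge 0$ are valid for $\{x : Ax \le b\} = P$: affine Farkas yields $\mu,\nu \ge 0$ with $\transpose{A}\mu = \unitvec{i}$, $\transpose{b}\mu \le 1$ and $\transpose{A}\nu = -\unitvec{i}$, $\transpose{b}\nu \le 0$, and weighting \eqref{eq_rlt_weak_first}/\eqref{eq_rlt_weak_second} by $\mu,\nu$ while using $x_j, 1-x_j \ge 0$ reproduces the four inequalities in \eqref{eq_mccormick} (with $j \in N$ in the strong version).

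The crux is \ref{thm_rlt_elementary_redundant}, for which I would isolate a \emph{lifting lemma}: every inequality $\transpose{c}x \le \delta$ valid for $P$ implies, at the level of \eqref{eq_rlt_weak}, both $\transpose{c}y_{\star,j} \le \delta x_j$ and $\transpose{c}(x - y_{\star,j}) \le \delta(1-x_j)$. Its proof writes $c = \transpose{A}\mu + \gamma - \rho$ and $\delta \ge \transpose{b}\mu + \transpose{\onevec}\gamma$ with $\mu,\gamma,\rho \ge 0$ (Farkas over $Ax \le b$, $x \le \onevec$, $-x \le \zerovec$), and then combines the $\mu$-weighting of \eqref{eq_rlt_weak_first}/\eqref{eq_rlt_weak_second} with the McCormick bounds from \ref{thm_rlt_elementary_mccormick} to control the $\gamma$- and $\rho$-terms. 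Granting this lemma, \ref{thm_rlt_elementary_redundant} is immediate: positive scaling of a row preserves the feasible set of the extended formulation verbatim, while passing to any other description of the same $P$ only introduces rows valid for $P$, whose linearizations are implied by the lifting lemma, so the projection onto $x$ is unchanged; symmetry gives equality. The \textbf{main obstacle} is precisely this bootstrapping: the McCormick inequalities must be in hand \emph{before} the lemma, since the box multipliers $\gamma,\rho$ are legitimated only through $y_{i,j} \le x_j$, $y_{i,j} \le x_i$, $y_{i,j} \ge x_i + x_j - 1$ and $y_{i,j} \ge 0$.

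Monotonicity \ref{thm_rlt_elementary_monotone} then follows from \ref{thm_rlt_elementary_redundant}: for $P \subseteq Q$ I compute $\RLTweak_B(P)$ from the description obtained by appending a description of $Q$ (valid since $P \subseteq Q$); its weak system contains that of $Q$, so every witness for $P$ is a witness for $Q$, giving $\RLTweak_B(P) \subseteq \RLTweak_B(Q)$, and likewise for $\RLTstrong$. For \ref{thm_rlt_elementary_complement} I construct an explicit affine involution $\tau^k$ on the extended space that mirrors the substitution $x_k \mapsto 1 - x_k$: it sends $x_k \mapsto 1 - x_k$, $y_{i,k} \mapsto x_i - y_{i,k}$ and $y_{k,j} \mapsto x_j - y_{k,j}$ (with $y_{k,k} \mapsto 1 - x_k$ when $k \in B$), and fixes every other coordinate. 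A routine check shows that $\tau^k$ maps $\RLTweakEF_B(P)$ bijectively onto $\RLTweakEF_B(\sigma^k(P))$ and satisfies $\pi \circ \tau^k = \sigma^k \circ \pi$ for the projection $\pi$ onto $x$, so projecting yields the claim; the strong case complements both indices analogously.

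Finally, for the face property \ref{thm_rlt_elementary_faces} with $F \coloneqq \{x \in P : \transpose{a}x = \beta\}$, the inclusion ``$\subseteq$'' combines $F \subseteq P$ with \ref{thm_rlt_elementary_monotone} and uses \ref{thm_rlt_elementary_subset} applied to $F$ to conclude $\transpose{a}x = \beta$ on $\RLTweak_B(F) \subseteq F$. For ``$\supseteq$'' I take $\hat x \in \RLTweak_B(P)$ with $\transpose{a}\hat x = \beta$ and, via \ref{thm_rlt_elementary_redundant}, a witness for the description augmented by $\transpose{a}x \le \beta$; the lifting lemma gives $\transpose{a}\hat y_{\star,j} \le \beta \hat x_j$ and $\transpose{a}(\hat x - \hat y_{\star,j}) \le \beta(1 - \hat x_j)$, whose sum equals $\transpose{a}\hat x = \beta$, forcing both to hold with \emph{equality}. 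Hence the same witness also satisfies the linearized equality constraints defining $F$, so $\hat x \in \RLTweak_B(F)$, and the strong case is identical. The only subtle points throughout are the lifting lemma underlying \ref{thm_rlt_elementary_redundant} and this tightness step.
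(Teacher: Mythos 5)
Your proof is correct, and on most items its substance coincides with the paper's own: the integer-point lifts $\hat y_{i,j} = \hat x_i \hat x_j$, the summing and block-restriction arguments for (iv), deriving (v) from (i), the complementation substitution for (ii) (the paper fixes one inclusion and invokes that $\sigma^k$ is an involution; your explicit $\tau^k$ uses the same substitution formulas, including $y_{k,k} \mapsto 1 - 2x_k + y_{k,k}$ in the strong case), and the forced-tightness step for (vi) all match. The one genuinely different piece is your ordering of (i) and (iii). The paper proves (i) first: since the listed rows $Ax \leq b$ by themselves define $P \subseteq [0,1]^N$, every inequality $\transpose{a}x \leq \beta$ valid for $P$ has an affine Farkas certificate $\transpose{\lambda}A = \transpose{a}$, $\transpose{\lambda}b \leq \beta$, $\lambda \geq 0$ over those rows alone, and applying $\lambda$ to \eqref{eq_rlt_first}/\eqref{eq_rlt_second} lifts the inequality directly; McCormick (iii) then falls out of (i) by appending the redundant inequalities $0 \leq x_i \leq 1$. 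You invert this: you prove (iii) first via multipliers for $\pm x_i$, and your lifting lemma for (i) decomposes $c = \transpose{A}\mu + \gamma - \rho$ over the box-augmented system, using McCormick to absorb the $\gamma$- and $\rho$-terms. This works, but the ``bootstrapping obstacle'' you flag as the main difficulty is self-imposed: the box constraints are themselves implied by $Ax \leq b$, so the certificate can always be taken with $\gamma = \rho = \zerovec$, and your lemma collapses to the paper's one-line multiplier argument. What your more general lemma buys is robustness (it would still apply to descriptions of $P$ whose rows do not by themselves force $0 \leq x \leq 1$); what the paper's route buys is brevity. One small slip worth fixing: in (iv) you call the case $B = \emptyset$ of $\RLTweak_B(P) \subseteq P$ trivial, but for $B = \emptyset$ the system \eqref{eq_rlt_weak} is empty, so $\RLTweak_\emptyset(P) = \R^N$ and the inclusion actually fails; the paper tacitly excludes this degenerate case as well, so it is not a real gap in your argument, but it is the opposite of trivial.
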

\begin{proof}
  Throughout the proof we assume that $P = \{ x \in \R^N \mid Ax \leq b \}$ for some matrix $A \in \R^{M \times N}$ and $b \in \R^M$.
  We only prove the statements about $\RLTstrong_B(P)$ since those for $\RLTweak_B(P)$ are almost identical.

  \ref{thm_rlt_elementary_redundant}:
  Scaling of inequalities $Ax \leq b$ clearly results inequalities~\eqref{eq_rlt_first} and~\eqref{eq_rlt_second} that are scaled accordingly and does not affect the remaining inequalities, which means that $\RLTstrongEF_B(P)$ remains unchanged.
  Let $\transpose{a} x \leq \beta$ be a redundant inequality for $Ax \leq b$.
  By strong duality there exist $\lambda \in \Rnonneg^M$ such that $\transpose{\lambda}A = a$ and $\transpose{\lambda}b \leq \beta$.
  Applying these multipliers to~\eqref{eq_rlt_first} and~\eqref{eq_rlt_second} for any $j \in N$ yields
  $\transpose{\lambda}A y_{\star,j} \leq \transpose{\lambda} b x_j$ and $\transpose{\lambda}A (x - y_{\star,j}) \leq \transpose{\lambda} b x_j$, which in turn implies $\transpose{a} y_{\star,j} \leq \beta x_j$ and $\transpose{a} (x - y_{\star,j}) \leq \beta (1 - x_j)$.
  Hence, the inequalities~\eqref{eq_rlt_first} and~\eqref{eq_rlt_second} for $\transpose{a}x \leq \beta$ are redundant for $\RLTstrong_B(P)$.

  \ref{thm_rlt_elementary_complement}:
  Let $k \in N$.
  Since the map $\sigma^k$ is an involution, it suffices to show $\sigma^k(\RLTstrong_B(\sigma^k(P))) \subseteq \RLTstrong_B(P)$.
  The points $x' \in \sigma^k(P)$ are described by
  \begin{equation*}
    \sum_{i \neq k} A_{\star,i} x'_i - A_{\star,k} x'_k \leq b - A_{\star,k} .
  \end{equation*}
  Consequently, the points $(x',y') \in \RLTstrongEF_B(\sigma^k(P))$ are described by
  \begin{subequations}
    \label{eq_complemented_rlt}
    \begin{alignat}{7}
    \sum_{i \neq k} A_{\star,i} y'_{i,j} - A_{\star,k} y'_{k,j} &\leq (b - A_{\star,k}) x'_j &\quad& \forall j \in N \label{eq_complemented_rlt_first} \\
    \sum_{i \neq k} A_{\star,i} (x'_i - y'_{i,j}) - A_{\star,k} (x'_k - y'_{k,j}) &\leq (b - A_{\star,k}) (1 - x'_j) &\quad& \forall j \in N \label{eq_complemented_rlt_second} \\
    y'_{i,j} &= y'_{j,i} &\quad& \forall i,j \in N \label{eq_complemented_rlt_symmetry} \\
    y'_{j,j} &= x'_j &\quad& \forall j \in B \label{eq_complemented_rlt_diag}
    \end{alignat}
  \end{subequations}
  Now consider a point $x \in \sigma^k(\RLTstrong_B(\sigma^k(P)))$ and let $x' \coloneqq \sigma^k(x)$.
  Hence, there exist $(x',y') \in \R^{N \times N}$ that satisfy~\eqref{eq_complemented_rlt} as well as $x = \sigma^k(x')$.
  We consider the vector $y \in \R^{N \times N}$ defined by
  \[
    y_{i,j} \coloneqq \begin{cases}
      y'_{k,k} - 2x'_k - 1 & \text{if } i = k,~ j = k \\
      x'_i - y'_{i,k} & \text{if } i \neq k,~ j = k \\
      x'_j - y'_{k,j} & \text{if } i = k,~ j \neq k \\
      y'_{i,j} & \text{if } i \neq k,~ j \neq k
    \end{cases}
  \]
  for all $i,j \in N$, which clearly satisfies~\eqref{eq_rlt_symmetry} and~\eqref{eq_rlt_diag} for all $j \in B$ with $j \neq k$.
  If $k \in B$, then $y_{k,k} = y'_{k,k} - 2x'_k - 1 = - (1 - x_k) - 1 = x_k$ holds as well.
  Moreover,
  \begin{align*}
    Ay_{\star,k}
    &= \sum_{i \neq k} A_{\star,i} y_{i,k} + A_{\star,k} y_{k,k}
    = \sum_{i \neq k} A_{\star,i} (x'_i - y'_{i,k}) + A_{\star,k} (y'_{k,k} - 2x'_k + 1) \\
    &= \sum_{i \neq k} A_{\star,i} (x'_i - y'_{i,k}) - A_{\star,k}(x'_k - y'_{k,k})
       + A_{\star,k} (1 - x'_k) \\
    &\leq (b - A_{\star,k})(1 - x'_k) 
       + A_{\star,k} (1 - x'_k)
    = b ( 1 - x'_k)
    = b x_k
  \end{align*}
  holds, where the inequality follows from~\eqref{eq_complemented_rlt_second} for $j = k$.
  For each $j \in N \setminus \{k\}$ we have
  \begin{align*}
    Ay_{\star,j}
    &= \sum_{i \neq k} A_{\star,i} y_{i,j} + A_{\star,k} y_{k,j}
    = \sum_{i \neq k} A_{\star,i} y'_{i,j} + A_{\star,k} (x'_j - y'_{k,j}) \\
    &= \sum_{i \neq k} A_{\star,i} y'_{i,j} - A_{\star,k} y'_{k,j} + A_{\star,k} x'_j
    \leq (b - A_{\star,k}) x'_j + A_{\star,k} x'_j
    = b x'_j,
  \end{align*}
  where the inequality follows from~\eqref{eq_complemented_rlt_first}.
  In addition, we have
  \begin{align*}
    A(x - y_{\star,k})
    &= \sum_{i \neq k} A_{\star,i} (x_i - y_{i,k}) + A_{\star,k} (x_k - y_{k,k}) \\
    &= \sum_{i \neq k} A_{\star,i} (x_i - x'_i + y'_{i,k}) + A_{\star,k} (x_k - y'_{k,k} + 2x'_k - 1) \\
    &= \sum_{i \neq k} A_{\star,i} y'_{i,k} - A_{\star,k} y'_{k,k} + A_{\star,k} (x_k + 2x'_k - 1) \\
    &\leq (b - A_{\star,k}) x'_k + A_{\star,k} (x_k + 2x'_k - 1)
    = b (1 - x_k),
  \end{align*}
  where the inequality follows from~\eqref{eq_complemented_rlt_first} for $j = k$.
  For each $j \in N \setminus \{k\}$ we have
  \begin{align*}
    A(x - y_{\star,j})
    &= \sum_{i \neq k} A_{\star,i} (x_i - y_{i,j}) + A_{\star,k} (x_k - y_{k,j}) \\
    &= \sum_{i \neq k} A_{\star,i} (x'_i - y'_{i,j}) + A_{\star,k} (1 - x'_k - x'_j + y'_{k,j}) \\
    &= \sum_{i \neq k} A_{\star,i} (x'_i - y'_{i,j}) - A_{\star,k} (x'_k - y'_{k,j}) + A_{\star,k} (1 - x'_j) \\
    &\leq (b - A_{\star,k}) (1 - x'_j) + A_{\star,k} (1 - x'_j)
    = b ( 1 - x_j)
  \end{align*}
  where the inequality follows from~\eqref{eq_complemented_rlt_second}.
  Thus, $(x,y) \in \RLTstrongEF_B(P)$, which implies $x \in \RLTstrong_B(P)$ and concludes the proof.

  \ref{thm_rlt_elementary_mccormick}:
  We augment the system $Ax \leq b$ with the (redundant) inequalities $-x_i \leq 0$ and $x_i \leq 1$ for each $i \in N$.
  The corresponding (redundant) inequalities~\eqref{eq_rlt_first} are $-y_{i,j} \leq 0$ and $y_{i,j} \leq x_j$ (for each $j \in N$) and those for~\eqref{eq_rlt_second} are $-x_i + y_{i,j} \leq 0$ and $x_i - y_{i,j} \leq 1 - x_j$ (for each $j \in N$), which are equivalent to~\eqref{eq_mccormick}.

  \ref{thm_rlt_elementary_subset}:
  For the first containment, consider $x \in \Z^B \times \R^{N \setminus B}$ with $x \in P$.
  Define $y_{i,j} \coloneqq x_i \cdot x_j$ for all $i,j \in N$, which implies~\eqref{eq_rlt_symmetry} and~\eqref{eq_rlt_diag}.
  By multiplying $Ax \leq b$ with $x_j \geq 0$ we obtain~\eqref{eq_rlt_first}, and by multiplying it with $1 - x_j \geq 0$ we obtain~\eqref{eq_rlt_second}.
  This, together with convexity of $\RLTstrong_B(P)$, yields the first containment.
  The second containment follows from the fact that every solution $(x,y)$ to~\eqref{eq_rlt_strong} projects to a solution $(x,y_{\star,B})$ of~\eqref{eq_rlt_weak}.
  For the third one we add \eqref{eq_rlt_first} and~\eqref{eq_rlt_second} for any $j \in N$, which yields
  $Ax = A y_{\star,j} +  A (x - y_{\star,j} ) \leq b x_j + b (1-x_j) = b$.
  Hence, $Ax \leq b$ is valid for $\RLTstrong_B(P)$, which implies $\RLTstrong_B(P) \subseteq P$.

  \ref{thm_rlt_elementary_monotone} follows from \ref{thm_rlt_elementary_redundant} since $P \subseteq Q$ implies that the inequalities valid for $Q$, which are redundant for $P$, do not impose further restrictions on $\RLTstrong_B(P)$.

  \ref{thm_rlt_elementary_faces}:
  Let $F \coloneqq \{ x \in P \mid \transpose{a}x = \beta \}$ denote such a face of $P$.
  The ``$\subseteq$''-part of~\eqref{eq_rlt_elementary_faces} follows from~\ref{thm_rlt_elementary_monotone} applied to $F \subseteq P$ and from~\ref{thm_rlt_elementary_subset} applied to $F$.

  It remains to prove ``$\supseteq$''.
  Since $\transpose{a}x \leq \beta$ is valid for $P$, strong duality yields existence of $\lambda \in \Rnonneg^m$ such that $\transpose{\lambda}A = \transpose{a}$ and $\transpose{\lambda}b \leq \beta$.
  Now consider an $x \in \RLTstrong_B(P)$ that also satisfies $\transpose{a}x = \beta$.
  Let $y \in \R^{n \times n}$ be the corresponding vector such that~\eqref{eq_rlt_strong} holds.
  For each $j \in N$ we obtain
  \[
    \beta 
    = \transpose{a} x
    = \transpose{\lambda} A x
    = \transpose{\lambda} A y_{\star,j} + \transpose{\lambda} A (x - y_{\star,j})
    \leq \transpose{\lambda} b x_j + \transpose{\lambda} b (1 - x_j)
    = \transpose{\lambda} b
    \leq \beta.
  \]
  Hence, equality holds throughout, which implies $\transpose{a} y_{\star,j} = \beta x_j$ as well as $\transpose{a} (x - y_{\star,j}) = \beta x_j$.
  These are exactly the constraints of~\eqref{eq_rlt_strong} that hold for $\RLTstrong_B(F)$ in addition to those for $\RLTstrong_B(P)$, which establishes the claim.
  \qed
\end{proof}

\DeclareDocumentCommand\Bfrac{}{\ensuremath{B^{\textup{frac}}}}

Let $\Bfrac(x) \coloneqq \{ i \in B \mid 0 < x_i < 1 \} \subseteq B$ denote the subset of coordinates of binary variables for which a point $x \in \R^N$ is fractional.

\begin{theorem}
  \label{thm_rlt_characterization}
  Let $P \subseteq [0,1]^N$ be a polytope, let $B \subseteq N$, and consider a point $x \in P$.
  Then the statements below satisfy
  $
    \ref{enum_rlt_point} \Rightarrow \ref{enum_rlt_pointprime} \Leftrightarrow \ref{enum_rlt_lift_and_project} \Rightarrow \ref{enum_rlt_equations}
  $.
  Moreover, if $B = N$ then all statements are equivalent.
  \begin{enumerate}[label=(\roman*)]
    \item
    \label{enum_rlt_point}
    $x \in \RLTstrong_B(P)$ holds.
    \item 
    \label{enum_rlt_pointprime}    
    $x \in \RLTweak_B(P)$ holds.
  \item
    \label{enum_rlt_lift_and_project}
    There exist points $\bar{z}^{(i,\beta)} \in \{ z \in P \mid z_i = \beta \}$ for all $(i,\beta) \in \Bfrac(x) \times \{0,1\}$ such that
    \begin{subequations}
      \begin{alignat}{7}
        &x \in \conv\big\{ \bar{z}^{(i,0)}, \bar{z}^{(i,1)} \big\} &\quad& \forall i \in \Bfrac(x) \label{eq_rlt_lift_and_project} \\
        &\bar{z}^{(i,1)}_j \cdot x_i - \bar{z}^{(j,1)}_i \cdot x_j = 0 &\quad& \forall i,j \in \Bfrac(x) \label{eq_rlt_extra1_for2}
      \end{alignat}
    \end{subequations}
  \item
    \label{enum_rlt_equations}
    There exist points $\bar{z}^{(i,\beta)} \in \{ z \in P \mid z_i = \beta \}$ for all $(i,\beta) \in \Bfrac(x) \times \{0,1\}$ such that
    \begin{subequations}
      \begin{alignat}{7}
        \bar{z}^{(i,1)}_j \cdot x_i - \bar{z}^{(j,1)}_i \cdot x_j &= 0 &\quad& \forall i,j \in \Bfrac(x) \label{eq_rlt_extra1} \\
        \bar{z}^{(i,0)}_j \cdot (1-x_i) - (1-\bar{z}^{(j,1)}_i) \cdot x_j &= 0 &\quad& \forall i,j \in \Bfrac(x)  \label{eq_rlt_extra2} \\
        x_k &= \bar{z}^{(i,\beta)}_k &\quad& \forall i \in \Bfrac(x) ~ \forall \beta \in \{0,1\} \qquad \nonumber \\
        & &\quad& \qquad \quad \forall k \in B \setminus \Bfrac(x) \label{eq_rlt_fixed}
      \end{alignat}
    \end{subequations}
  \end{enumerate}
\end{theorem}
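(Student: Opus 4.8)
The plan is to make everything hinge on the equivalence $\ref{enum_rlt_pointprime}\Leftrightarrow\ref{enum_rlt_lift_and_project}$, which I would prove through an explicit dictionary between a certificate $y\in\R^{N\times B}$ for membership in $\RLTweak_B(P)$ and the family of points $\bar z^{(i,\beta)}$. The implication $\ref{enum_rlt_point}\Rightarrow\ref{enum_rlt_pointprime}$ requires nothing new, being the inclusion $\RLTstrong_B(P)\subseteq\RLTweak_B(P)$ from \cref{thm_rlt_elementary}. For $i\in\Bfrac(x)$ we have $0<x_i<1$, so the dictionary
\[
  \bar z^{(i,1)}\coloneqq\tfrac{1}{x_i}\,y_{\star,i},\qquad
  \bar z^{(i,0)}\coloneqq\tfrac{1}{1-x_i}\,(x-y_{\star,i}),\qquad
  y_{\star,i}\coloneqq x_i\,\bar z^{(i,1)}
\]
is well defined in both directions.

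For $\ref{enum_rlt_pointprime}\Rightarrow\ref{enum_rlt_lift_and_project}$ I would verify, using \eqref{eq_rlt_weak_first} and \eqref{eq_rlt_weak_second} together with $x_i,1-x_i\ge 0$, that $\bar z^{(i,1)},\bar z^{(i,0)}\in P$; using the diagonal identity \eqref{eq_rlt_weak_diag} that $\bar z^{(i,1)}_i=1$ and $\bar z^{(i,0)}_i=0$; and by construction that $x_i\bar z^{(i,1)}+(1-x_i)\bar z^{(i,0)}=x$, which is \eqref{eq_rlt_lift_and_project}. Crucially, the symmetry constraint \eqref{eq_rlt_weak_symmetry} translates, under the dictionary, precisely into \eqref{eq_rlt_extra1_for2}, since $y_{i,j}=x_j\bar z^{(j,1)}_i$ and $y_{j,i}=x_i\bar z^{(i,1)}_j$.

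The reverse implication $\ref{enum_rlt_lift_and_project}\Rightarrow\ref{enum_rlt_pointprime}$ is where the main obstacle lies. I would set $y_{\star,i}\coloneqq x_i\bar z^{(i,1)}$ for $i\in\Bfrac(x)$, and for the remaining $j\in B\setminus\Bfrac(x)$ put $y_{\star,j}\coloneqq x$ when $x_j=1$ and $y_{\star,j}\coloneqq\zerovec$ when $x_j=0$, then check \eqref{eq_rlt_weak}. The two product inequalities and the diagonal identity follow from $\bar z^{(i,\beta)}\in P$, from $\bar z^{(i,1)}_i=1$, and from the identity $x-x_i\bar z^{(i,1)}=(1-x_i)\bar z^{(i,0)}$ read off \eqref{eq_rlt_lift_and_project}. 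The delicate part is the symmetry \eqref{eq_rlt_weak_symmetry} for pairs in which at least one index lies in $B\setminus\Bfrac(x)$, where no defining equation of \ref{enum_rlt_lift_and_project} is directly available. The key enabling fact is a \emph{boundary-forcing} observation: because $P\subseteq[0,1]^N$, reading \eqref{eq_rlt_lift_and_project} in any coordinate $k$ with $x_k\in\{0,1\}$ forces $\bar z^{(i,0)}_k=\bar z^{(i,1)}_k=x_k$, as a convex combination of numbers in $[0,1]$ attains a bound only when both arguments do. With this, every pair involving a $\{0,1\}$-coordinate is symmetric by inspection, while pairs inside $\Bfrac(x)$ are exactly \eqref{eq_rlt_extra1_for2}.

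It remains to address $\ref{enum_rlt_lift_and_project}\Rightarrow\ref{enum_rlt_equations}$ and the case $B=N$. The former keeps the same points: \eqref{eq_rlt_extra1} is \eqref{eq_rlt_extra1_for2}, \eqref{eq_rlt_fixed} is the boundary-forcing fact, and \eqref{eq_rlt_extra2} is obtained by subtracting \eqref{eq_rlt_extra1_for2} from the coordinate-$j$ reading of \eqref{eq_rlt_lift_and_project}. For $B=N$ the systems \eqref{eq_rlt_weak} and \eqref{eq_rlt_strong} coincide, giving $\ref{enum_rlt_point}\Leftrightarrow\ref{enum_rlt_pointprime}$, so only $\ref{enum_rlt_equations}\Rightarrow\ref{enum_rlt_lift_and_project}$ is left, i.e.\ reconstructing the convex combination \eqref{eq_rlt_lift_and_project} coordinatewise: for $j\in\Bfrac(x)$ with $j\ne i$ add \eqref{eq_rlt_extra1} and \eqref{eq_rlt_extra2}; for $j=i$ use $\bar z^{(i,1)}_i=1,\bar z^{(i,0)}_i=0$; and for $j\in N\setminus\Bfrac(x)$ use \eqref{eq_rlt_fixed}. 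This last step needs \eqref{eq_rlt_fixed} to cover \emph{every} non-fractional coordinate, which holds exactly when $B=N$ leaves no unconstrained continuous coordinate — pinpointing why the chain cannot be reversed in general.
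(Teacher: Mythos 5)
Your proposal is correct and follows essentially the same route as the paper's proof: the same dictionary $\bar z^{(i,1)} = \tfrac{1}{x_i}y_{\star,i}$, $\bar z^{(i,0)} = \tfrac{1}{1-x_i}(x-y_{\star,i})$ translating \eqref{eq_rlt_weak_symmetry} into \eqref{eq_rlt_extra1_for2}, the same boundary-forcing observation (which the paper uses when asserting $\bar z^{(i,1)}_k = x_k$ for $k \in B \setminus \Bfrac(x)$ and proves explicitly in its \ref{enum_rlt_lift_and_project}$\Rightarrow$\ref{enum_rlt_equations} step), and the same summation of \eqref{eq_rlt_extra1} and \eqref{eq_rlt_extra2} for the final implication under $B=N$. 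The only cosmetic deviations are your definition $y_{\star,j} \coloneqq x_j \cdot x$ on non-fractional columns (the paper sets $y_{i,j} = x_i\bar z^{(i,1)}_j$ there, which coincides with yours by boundary forcing) and your explicit remark that \eqref{eq_rlt_weak} and \eqref{eq_rlt_strong} coincide when $B=N$, a point the paper leaves implicit.
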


  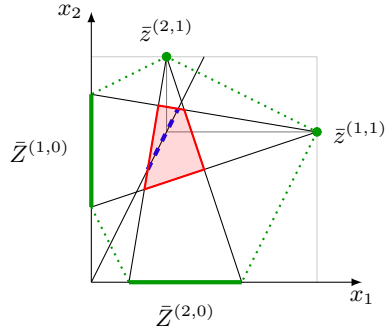
\begin{figure}[htpb]
    \centering
    \begin{tikzpicture}
    \draw[-latex] (0,0) -- (3.6,0) node[anchor=north] {$x_1$};
    \draw[-latex] (0,0) -- (0,3.6) node[anchor=east] {$x_2$};
    \draw[opacity = 0.2] (0,3) -- (3,3);
    \draw[opacity = 0.2] (3,0) -- (3,3);
    \draw[opacity = 0.5, thin] (1,2) -- (1,3);
    \draw[opacity = 0.5, thin] (1,2) -- (3,2);
    \draw (0,0) -- (1.5,3);
    \draw (1,3) -- (0.5,0) -- (2,0) -- cycle;
    \draw (3,2) -- (0,1) -- (0,2.5) -- cycle;
    \draw[dotted, thick, draw=green!60!black] (0,1) -- (0.5,0) -- (2,0) -- (3,2) -- (1,3) -- (0,2.5) -- (0,1) -- cycle;
    \draw[draw=green!60!black,ultra thick] (0,1) -- (0,2.5) node [midway] (unit3) {};
    \draw[draw=green!60!black,ultra thick] (0.5,0) -- (2,0) node [midway](unit4) {};
    \draw[draw=blue,ultra thick, dashed] (3/4,6/4) -- (15/13,30/13);
    \node[circle, draw=green!60!black, fill=green!60!black, inner sep=0.4mm] (unit1) at (3,2) {};
    \node[circle, draw=green!60!black, fill=green!60!black, inner sep=0.4mm] (unit2) at (1,3) {};
    \draw[draw=red, thick, fill=red, fill opacity=0.15] (3/2,3/2) -- (12/17,21/17) --  (33/37,87/37) -- (21/17,39/17) -- cycle;
    \node[right=1pt of unit1]{$\bar{z}^{(1,1)}$};
    \node[above=1pt of unit2] {$\bar{z}^{(2,1)}$};
    \node[left=1pt of unit3] {$\bar{Z}^{(1,0)}$};
    \node[below=1pt of unit4] {$\bar{Z}^{(2,0)}$};
    \end{tikzpicture}
    \caption{%
    Consider the polytope $P \subseteq [0,1]^2$ bounded by the \textcolor{green!60!black}{dotted lines} and the \textcolor{green!60!black}{lines} $\bar{Z}^{(1,0)}$ and $\bar{Z}^{(2,0)}$ for $B = \{1,2\}$ for which the points $\bar{z}^{(1,1)},~\bar{z}^{(2,1)}$ and all points $\bar{z}^{(i,0)} \in \bar{Z}^{(i,0)}$ for $i=1,2$ satisfy statement~\ref{enum_rlt_lift_and_project}. 
    Then \eqref{eq_rlt_lift_and_project} implies that all $x$ must lie in the \textcolor{red}{shaded region}, while \eqref{eq_rlt_extra1_for2} implies that all $x$ must be on the line through $\transpose{(0,0)}$ and $\transpose{(\bar{z}^{(2,1)}_1,\bar{z}^{(1,1)}_2)}$.
    Consequently, the \textcolor{blue}{dashed line segment} indicates all points $x \in P$ that satisfy \ref{enum_rlt_lift_and_project} for $z^{(i,1)}$ and $z^{(i,0)} \in \bar{Z}^{(i,0)}$ with $i=1,2$.
    \label{fig_rlt_z_relation}
  }
\end{figure}

\begin{figure}[htpb]
  \centering%
  \begin{tikzpicture}
    \draw[-latex] (0,0) -- (3.6,0) node[anchor=north] {$x_1$};
    \draw[-latex] (0,0) -- (0,3.6) node[anchor=east] {$x_2$};

    \draw[draw=red, thick, fill=red, fill opacity=0.15] (3,0) -- (0,0) -- (1.5,3) -- cycle;
    \draw[opacity=0.2] (0,3) -- (3,3);
    \draw[opacity=0.2] (3,0) -- (3,3);
    \draw[draw=blue, ultra thick, dashed] (0,0) -- (3,0);
    \node[circle, draw=black, fill=black, inner sep=0.4mm] (unit1) at (3,0) {};
    \node[circle, draw=black, fill=black, inner sep=0.4mm] (unit2) at (0,0) {};
    \node[circle, draw=black, fill=black, inner sep=0.4mm] (unit3) at (1.5,3) {};
    \node[below=1pt of unit1]{$(1,0)$};
    \node[below=1pt of unit2] {$(0,0)$};
    \node[above=1pt of unit3] {$(\frac{1}{2},1)$};
  \end{tikzpicture}
  \caption{%
    Consider the polytope $P \coloneqq \{ x \in [0,1]^2 \mid -2x_1 + x_2 \leq 0,~ 2x_1 + x_2 \leq 2 \}$ for $B = \{1\}$ (depicted as the \textcolor{red}{solid triangle}).
    By construction, the points $\bar{z}^{(1,\beta)}$ from \cref{thm_rlt_characterization}~\ref{enum_rlt_lift_and_project} and~\ref{enum_rlt_equations} are unique and satisfy $\bar{z}^{(1,0)} = \transpose{(0,0)}$ and $\bar{z}^{(1,1)} = \transpose{(1,0)}$.
    The point $x = \transpose{(\frac{1}{2},1)}$, for example, together with $\bar{z}$ satisfies statement~\ref{enum_rlt_equations} but violates statement~\ref{enum_rlt_lift_and_project}.
    All points satisfying statement~\ref{enum_rlt_lift_and_project}, that is $\RLTweak_B(P)$, are depicted as the \textcolor{blue}{dashed line}, while all points in the shaded region satisfy statement~\ref{enum_rlt_equations}.
    \label{fig_example_4_not_3}
  }
\end{figure}

\begin{proof}
  Throughout the proof we assume that $P = \{ x \in \R^N \mid Ax \leq b \}$ for some matrix $A \in \R^{M \times N}$ and $b \in \R^M$.
  By definition of $\RLTstrong_B(P)$ and $\RLTweak_B(P)$, \ref{enum_rlt_point} implies \ref{enum_rlt_pointprime}.
  
  \medskip
  
  Now we will show that \ref{enum_rlt_pointprime} implies \ref{enum_rlt_lift_and_project}.
  By \ref{enum_rlt_pointprime} there exists a $y \in \R^{N \times B}$ such that~\eqref{eq_rlt_weak} holds.
  For each $i \in \Bfrac(x)$, let $\bar{z}^{(i,1)} \coloneqq \frac{1}{x_i} y_{\star,i}$ and $\bar{z}^{(i,0)} \coloneqq \frac{1}{1-x_i} (x - y_{\star,i})$.
  From \eqref{eq_rlt_weak_first} we get $\bar{z}^{(i,1)} \in P$ and \eqref{eq_rlt_weak_diag} gives $\bar{z}^{(i,1)}_i = 1$.
  Similarly, \eqref{eq_rlt_weak_second} implies $\bar{z}^{(i,0)} \in P$ and~\eqref{eq_rlt_weak_diag} yields $\bar{z}^{(i,0)}_i = 0$.
  Let $i,j \in \Bfrac(x)$.
  We establish~\eqref{eq_rlt_extra1_for2} via
  \begin{equation*}
    \bar{z}^{(i,1)}_j \cdot x_i - \bar{z}^{(j,1)}_i \cdot x_j
    = y_{j,i} \frac{x_i}{x_i} - y_{i,j} \frac{x_j}{x_j}
    = y_{j,i} - y_{i,j} = 0,
  \end{equation*}
  where the last equality follows from~\eqref{eq_rlt_symmetry}.
  To show~\eqref{eq_rlt_lift_and_project}, note that $x_i \in [0,1]$ for $i\in \Bfrac(x)$ and
  $
    x_i \cdot \bar{z}^{(i,1)} + (1-x_i) \cdot \bar{z}^{(i,0)} = y_{\star,i} - (x-y_{\star,1}) = x
  $
  holds.

  \medskip

  Third, we will show that \ref{enum_rlt_lift_and_project} implies \ref{enum_rlt_pointprime}.
  For all $i \in \Bfrac(x)$ and $\beta = 0,1$, let $\bar{z}^{(i,\beta)}$ be as in \ref{enum_rlt_lift_and_project}.
  We define $y \in \R^{N \times B}$ via
  \[
    y_{i,j} \coloneqq \begin{cases}
      x_j \cdot \bar{z}^{(j,1)}_i & \text{ if } j \in \Bfrac(x)  \\
      x_i \cdot \bar{z}^{(i,1)}_j & \text{ if } i \in \Bfrac(x),~j \notin \Bfrac(x)  \\
      x_i \cdot x_j &\text{ otherwise}
    \end{cases}
    \qquad \forall i\in N, j \in B.
  \]
  Note that~\eqref{eq_rlt_extra1_for2} implies $y_{i,j} = y_{j,i}$ for $i,j \in \Bfrac(x)$, that is, \eqref{eq_rlt_weak_symmetry} holds.
  For all other $i,j$, it follows by construction.
  For $k \in B \setminus \Bfrac(x)$ we have $y_{k,k} = x_k^2 = x_k$.
  For $j \in \Bfrac(x)$, we have $y_{j,j} = x_j \cdot \bar{z}^{(j,1)}_j = x_j$.
  Thus, \eqref{eq_rlt_weak_diag} follows.

\noindent
  For each $j \in \Bfrac(x)$ we obtain $A y_{\star,j} = A \bar{z}^{(j,1)} x_j \leq b x_j$ since $\bar{z}^{(j,1)} \in P$, that is, \eqref{eq_rlt_weak_first} holds for all $j \in \Bfrac(x)$.
  Since $\bar{z}_j^{(j,0)} = 0$ and $\bar{z}_j^{(j,1)} = 1$ holds, every convex combination $x = \lambda \cdot \bar{z}^{(j,1)} + (1-\lambda) \cdot \bar{z}^{(j,0)}$ has $x_j = \lambda$. Therefore,
  \begin{equation*}
    A (x-y_{\star,j})
    = A(x_j \cdot \bar{z}^{(j,1)} + (1-x_j) \cdot \bar{z}^{(j,0)} -x_j \cdot \bar{z}^{(j,1)})
    = A\bar{z}^{(j,0)}(1-x_j)
    = b(1-x_j).
  \end{equation*}
  Thus, \eqref{eq_rlt_weak_second} also holds for all $j \in \Bfrac(x)$.
   Suppose $x_k = 0$ for $k \in B \setminus \Bfrac(x)$. By assumption $z_k^{(i,1)} = 0$ holds for all $i\in \Bfrac(x)$.
  Consequently, $y_{i,k} = 0$ holds for all $i \in N$, and~\eqref{eq_rlt_weak_first} follows immediately.
  From $x \in P$ we have $A x \leq b$ and thus $A (x - y_{\star,k}) = A x \leq b = b (1 - 0)$, that is, \eqref{eq_rlt_weak_second} holds.

  Suppose $x_k = 1$. Then, for $i\in \Bfrac(x)$ we have $\bar{z}_k^{(i,1)} = 1$ by assumption.
  Therefore, $x_i - y_{i,k} = 0$ holds for all $i \in N$, and~\eqref{eq_rlt_weak_second} follows immediately.
  From $x \in P$ we have $A y_{\star,k} = A x \leq b = b x_k$, that is, \eqref{eq_rlt_weak_first} holds.
  As~\eqref{eq_rlt_weak} holds for $(x,y)$, we conclude with $x \in \RLTweak_B(P)$.
  
\medskip

  We now show that \ref{enum_rlt_lift_and_project} implies \ref{enum_rlt_equations}.
  Again, as $\bar{z}_i^{(i,0)} = 0$ and $\bar{z}_i^{(i,1)} = 1$ holds, every convex combination $x = \lambda \cdot \bar{z}^{(i,1)} + (1-\lambda) \cdot \bar{z}^{(i,0)}$ has $x_i = \lambda$.
  Hence, \eqref{eq_rlt_lift_and_project} implies $x_j = x_i \cdot \bar{z}^{(i,1)}_j + (1 - x_i) \cdot \bar{z}_j^{(i,0)}$ for $i \in \Bfrac(x)$ and $j \in N$.
  Therefore, for $i,j \in \Bfrac(x)$,
  \begin{align*}
      \bar{z}^{(i,0)}_j \cdot (1-x_i) - (1-\bar{z}^{(j,1)}_i) \cdot x_j & = \left (x_j - x_i \cdot \bar{z}_j^{(i,1)} \right ) - (1-\bar{z}^{(j,1)}_i)\cdot x_j \\
      & = x_j \cdot \bar{z}^{(j,1)}_i - x_i \cdot \bar{z}_j^{(i,1)}=0,
  \end{align*}
  where the last equality follows from \eqref{eq_rlt_extra1_for2}. To see why equation~\eqref{eq_rlt_fixed} is satisfied, consider $j \in \Bfrac(x)$ and $k \in B \setminus \Bfrac(x)$.
  Suppose $x_k = 1$ holds. 
  Note that because $\bar{z}^{(j,\beta)} \in P$, we have $\bar{z}^{(j,\beta)}_k \leq 1$.
  We obtain \begin{equation*}
      1 = x_k = x_j \cdot \bar{z}^{(j,1)}_k + (1 - x_j) \cdot \bar{z}_k^{(j,0)} \leq x_j \cdot 1 + (1-x_j) \cdot 1 = 1
  \end{equation*}
  Equality must hold and therefore $\bar{z}_k^{(j,0)} = \bar{z}_k^{(j,1)} = 1 = x_k$ holds. Similarly, as $\bar{z}^{(j,\beta)} \in P$, we have $\bar{z}^{(j,\beta)}_k \geq 0$ and \begin{equation*}
      0 = x_k = x_j \cdot \bar{z}^{(j,1)}_k + (1 - x_j) \cdot \bar{z}_k^{(j,0)} \geq x_j \cdot 0 + (1-x_j) \cdot 0 = 0
  \end{equation*} 
  Consequently $\bar{z}_k^{(j,0)} = \bar{z}_k^{(j,1)} = 0 = x_k$.
  
\medskip

  We finally show that \ref{enum_rlt_equations} implies \ref{enum_rlt_lift_and_project} if $B = N$.
  For all $i \in \Bfrac(x)$ and $\beta = 0,1$, let $\bar{z}^{(i,\beta)}$ be as in \ref{enum_rlt_equations}.
  Since~\eqref{eq_rlt_extra1_for2} is equivalent to~\eqref{eq_rlt_extra1},
  we only need to show~\eqref{eq_rlt_lift_and_project}.
  To this end, we add~\eqref{eq_rlt_extra1} and~\eqref{eq_rlt_extra2}, which yields
  \begin{multline*}
    0
    = \bar{z}^{(i,1)}_j \cdot x_i - \bar{z}^{(j,1)}_i \cdot x_j
    + \bar{z}^{(i,0)}_j \cdot (1 - x_i) - (1-\bar{z}^{(j,1)}_i) \cdot x_j  \\
    = \bar{z}^{(i,1)}_j \cdot x_i + \bar{z}^{(i,0)}_j \cdot (1 - x_i) - x_j.
  \end{multline*}
  We obtain $x_j = \bar{z}^{(i,1)}_j \cdot x_i + \bar{z}^{(i,0)}_j \cdot (1 - x_i)$ for each $j \in \Bfrac(x)$.
  For each $k \in B \setminus \Bfrac(x)$, equation~\eqref{eq_rlt_fixed} implies $x_k = \bar{z}^{(i,1)}_k = \bar{z}^{(i,0)}_k$ and thus also $x_k = \bar{z}^{(i,1)}_k \cdot x_i + \bar{z}^{(i,0)}_k \cdot (1 - x_i)$.
  Due to $B = N$, this covers all $j \in N$, and we conclude that $x = \bar{z}^{(i,1)} \cdot x_i + \bar{z}^{(i,0)} \cdot (1 - x_i)$ holds, which implies~\eqref{eq_rlt_lift_and_project} due to $x_i \in [0,1]$.
  \qed
\end{proof}

\Cref{fig_rlt_z_relation} visualizes the impact of conditions~\eqref{eq_rlt_lift_and_project} and~\eqref{eq_rlt_extra1_for2}.
\Cref{fig_example_4_not_3} depicts an example showing that \ref{enum_rlt_equations} does not imply \ref{enum_rlt_lift_and_project}.

\begin{remark}
\label{rem_rlt_calc_z}
    For a point $(x,y) \in \RLTstrongEF_B(P)$ we can calculate the corresponding $\bar{z}^{(i,\beta)}$ as
    $\bar{z}^{(i,1)} = \frac{1}{x_i} y_{\star,i}$ and $\bar{z}^{(i,0)} = \frac{1}{1-x_i} (x - y_{\star,i})$ for $i \in \Bfrac(x)$.
\end{remark}

For the pure integer case $B = N$, \cref{thm_rlt_characterization} shows that the RLT closures only depend on the intersection $P$ with the faces of the unit cube.

\begin{corollary}
 \label{thm_rlt_boundary}
  $\RLTstrong_N(P) = \RLTweak_N(P)$ only depends the intersection of $P$ with the boundary of the unit cube $[0,1]^N$.
\end{corollary}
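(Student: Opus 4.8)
The plan is to show that when $B=N$, the RLT closure $\RLTweak_N(P)$ is completely determined by the intersection $P \cap \partial([0,1]^N)$, where $\partial$ denotes the boundary of the cube. The key observation is that \cref{thm_rlt_characterization} with $B=N$ establishes the equivalence of all four statements, and in particular characterizes membership $x \in \RLTweak_N(P)$ purely in terms of the auxiliary points $\bar{z}^{(i,\beta)}$. Each such point lies in $\{z \in P \mid z_i = \beta\}$ for $\beta \in \{0,1\}$, which is precisely a subset of a facet of the unit cube. Since $\RLTstrong_N(P) = \RLTweak_N(P)$ already follows from the equivalence $\ref{enum_rlt_point} \Leftrightarrow \ref{enum_rlt_pointprime}$ in the theorem for $B=N$, it suffices to treat a single closure.

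First I would make precise what ``depends only on'' means: given two polytopes $P, Q \subseteq [0,1]^N$ with $P \cap \partial([0,1]^N) = Q \cap \partial([0,1]^N)$, I must show $\RLTweak_N(P) = \RLTweak_N(Q)$. By symmetry it is enough to prove one inclusion, say $\RLTweak_N(P) \subseteq \RLTweak_N(Q)$. Take $x \in \RLTweak_N(P)$. If $x$ is integral, then $x \in P \cap \partial([0,1]^N) = Q \cap \partial([0,1]^N) \subseteq Q$, and integral points of $Q$ lie in $\RLTweak_N(Q)$ by \cref{thm_rlt_elementary}\ref{thm_rlt_elementary_subset}. Otherwise $\Bfrac(x) \neq \emptyset$, and I would invoke the characterization \ref{enum_rlt_lift_and_project} to obtain points $\bar{z}^{(i,\beta)} \in \{z \in P \mid z_i = \beta\}$ satisfying \eqref{eq_rlt_lift_and_project} and \eqref{eq_rlt_extra1_for2}.

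The crucial step is to argue that each $\bar{z}^{(i,\beta)}$ already lies in $Q$: indeed $\bar{z}^{(i,\beta)}_i = \beta \in \{0,1\}$ forces $\bar{z}^{(i,\beta)} \in P \cap \partial([0,1]^N)$, since any point of $P \subseteq [0,1]^N$ having a coordinate equal to $0$ or $1$ lies on the cube's boundary. Hence $\bar{z}^{(i,\beta)} \in Q \cap \partial([0,1]^N) \subseteq Q$, and moreover $\bar{z}^{(i,\beta)} \in \{z \in Q \mid z_i = \beta\}$. The conditions \eqref{eq_rlt_lift_and_project} and \eqref{eq_rlt_extra1_for2} are statements about $x$ and the $\bar{z}^{(i,\beta)}$ alone and make no reference to the inequality description of $P$; therefore the very same points witness that $x$ satisfies statement \ref{enum_rlt_lift_and_project} with respect to $Q$. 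Applying the equivalence of \cref{thm_rlt_characterization} in the direction $\ref{enum_rlt_lift_and_project} \Rightarrow \ref{enum_rlt_pointprime}$ (valid for all $B$) then yields $x \in \RLTweak_N(Q)$, as desired.

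The main obstacle is purely bookkeeping: one must verify that the witnessing points produced by the characterization genuinely lie on the boundary and transfer to $Q$, which hinges on the fact that $\bar{z}^{(i,\beta)}_i = \beta$ is an integral coordinate. No serious difficulty arises because \cref{thm_rlt_characterization}\ref{enum_rlt_lift_and_project} deliberately decouples the membership condition from the algebraic description of the polytope, reducing everything to set-membership of finitely many boundary points. A minor point to state carefully is the equality $\RLTstrong_N(P) = \RLTweak_N(P)$, which I would record as an immediate consequence of the full equivalence of \ref{enum_rlt_point} and \ref{enum_rlt_pointprime} guaranteed by the theorem when $B=N$.
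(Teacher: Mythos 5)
Your proposal is correct and is essentially the paper's own (implicit) argument: the corollary is stated as a direct consequence of \cref{thm_rlt_characterization}, precisely because statement~\ref{enum_rlt_lift_and_project} refers to $P$ only through the points $\bar{z}^{(i,\beta)} \in \{z \in P \mid z_i = \beta\}$, which lie on facets of the cube, so the same witnesses transfer between any two polytopes with identical boundary intersection. The only detail worth making explicit is that \cref{thm_rlt_characterization} is stated for points $x \in Q$, so before invoking the direction $\ref{enum_rlt_lift_and_project} \Rightarrow \ref{enum_rlt_pointprime}$ for $Q$ you should note that $x \in Q$ already follows from \eqref{eq_rlt_lift_and_project} and the convexity of $Q$, since the witnessing points $\bar{z}^{(i,0)}, \bar{z}^{(i,1)}$ lie in $Q$.
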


\section{Quadratic assignment problem}
\label{sec_qap}

\DeclareDocumentCommand\Xassign{}{\ensuremath{\mathcal{X}}}
\DeclareDocumentCommand\Passign{}{\ensuremath{\mathcal{P}}}

In this section we consider the \emph{quadratic assignment problem} in its general form
\begin{multline}
  \text{min } \sum_{i,j,k,\ell \in N} q_{i,j,k,\ell} x_{i,j} x_{k,\ell} \\ 
  \text{s.t. } \sum_{i \in N} x_{i,j} = 1 ~ \forall j \in N,~ \sum_{j \in N} x_{i,j} = 1 ~\forall i \in N,~ x \in \{0,1\}^{N \times N},
  \label{model_qap}
\end{multline}
where $N$ is a (finite) ground set and where $q \in \Rnonneg^{N \times N \times N \times N}$ are costs.
We denote its set of feasible solutions by $\Xassign_N \subseteq \{0,1\}^{N \times N}$, and by $\Passign_N \coloneqq \conv(\Xassign_N)$ its convex hull.
It is well-known that $\Passign_N$ is described the equations of~\eqref{model_qap} and nonnegativity~\cite{Birkhoff46,Neumann53}.
We consider different ways of reformulating~\eqref{model_qap} as a mixed-integer program.

The first one is generic and covers several formulations with additional variables $w \in \R^{N \times N}$ with generic coupling inequalities
\begin{equation}
  \onevec w_{i,j} \geq A^{(i,j)}x + b^{(i,j)} \quad \forall i,j \in N \label{eq_qap_x_w}
\end{equation}
(with $\onevec$ being the all-$1$s vector compatible with the dimensions of $A^{(i,j)}$ and $b^{(i,j)}$) that model
\begin{equation}
  w_{i,j} = x_{i,j} \sum_{k,\ell \in N} q_{i,j,k,\ell} x_{k,\ell}   
  \label{eq_qap_coupling_implication}
\end{equation}
whenever $x_{i,j}$ is binary.
Clearly, $A^{(i,j)}$ and $b^{(i,j)}$ may depend on the costs $q$.
Note that larger $w$-values are feasible for~\eqref{eq_qap_x_w}, but~\eqref{eq_qap_coupling_implication} will hold for every optimal solution when minimizing $\sum_{i,j \in N} w_{i,j}$.
The formulation then reads
\begin{equation}
  \text{min } \sum_{i,j \in N} w_{i,j} \text{ s.t. } x \in \Xassign_N,~ (w,x) \text{ satisfies~\eqref{eq_qap_x_w}}.
  \label{model_qap_compact}
\end{equation}
Several formulations from the literature follow that pattern, potentially after applying an affine transformation.
Worth mentioning are the Kaufman-Broeckx linearization~\cite{KaufmanB78}, the Xian-Yuan linearization~\cite{XiaY06} and the Gilmore-Lawler linearization proposed in~\cite{ZhangBM13}.

The second formulation is essentially $\RLTweakEF_{(N \times N)}(\Passign_N)$ and is due to Adams and Johnson~\cite{AdamsJ94}.
For our purposes we augment it by similar $w$-variables in a way that does not affect the strength:
\begin{subequations}
  \label{model_qap_aj}
  \begin{alignat}{7}
    & \text{min } ~ \mathrlap{ \sum_{i,j \in N} w_{i,j} } \\
    & \text{s.t. }
      & x &\in \Passign_N \label{model_qap_aj_assignment} \\
    & & \sum_{i \in N} y_{i,j,k,\ell} &= x_{k,\ell} &\quad& \forall j,k,\ell \in N \label{model_qap_aj_first} \\
    & & \sum_{j \in N} y_{i,j,k,\ell} &= x_{k,\ell} &\quad& \forall i,k,\ell \in N \label{model_qap_aj_second} \\
    & & y_{i,j,i,j} &= x_{i,j} &\quad& \forall i,j \in N \label{model_qap_aj_diag} \\
    & & y_{i,j,k,\ell} = y_{k,\ell,i,j} &\geq 0 &\quad& \forall i,j,k,\ell \in N \label{model_qap_aj_symmetry_nonneg} \\
    & & \sum_{k,\ell \in N} q_{i,j,k,\ell} y_{i,j,k,\ell} &= w_{i,j}  &\quad& \forall i,j \in N \label{model_qap_aj_w}
  \end{alignat}
\end{subequations}
Our first result in this section establishes dominance of the first class of formulations regarding the strength of their LP relaxations.

\begin{theorem}
  \label{thmQAPdominanceSingle}
  The LP bound of~\eqref{model_qap_aj} is greater than or equal to that of any formulation of the form~\eqref{model_qap_compact}.
\end{theorem}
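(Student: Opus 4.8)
The plan is to show that the LP relaxation of~\eqref{model_qap_compact} admits a feasible point whose objective value is at most the optimum of the Adams--Johnson relaxation~\eqref{model_qap_aj}; since both are minimization problems whose $x$-part ranges in $\Passign_N$, this yields the asserted dominance. Concretely, I would fix an optimal solution $(x^*, y^*, w^*)$ of~\eqref{model_qap_aj} and project it by setting $\tilde x \coloneqq x^*$ and, for each $(i,j)$, choosing $\tilde w_{i,j}$ as small as~\eqref{eq_qap_x_w} permits, namely $\tilde w_{i,j} \coloneqq g^{(i,j)}(x^*)$ where $g^{(i,j)}(x) \coloneqq \max_r \big(A^{(i,j)} x + b^{(i,j)}\big)_r$. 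Since $x^* \in \Passign_N$, the pair $(\tilde w, \tilde x)$ is feasible for the LP relaxation of~\eqref{model_qap_compact}, so it suffices to prove the entrywise bound $\tilde w_{i,j} \le w^*_{i,j}$ and sum over $(i,j)$.

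The function $g^{(i,j)}$ is a maximum of finitely many affine functions, hence convex, and the assumption that~\eqref{eq_qap_x_w} correctly models~\eqref{eq_qap_coupling_implication} means that $g^{(i,j)}(\chi) = \chi_{i,j} \sum_{k,\ell} q_{i,j,k,\ell}\, \chi_{k,\ell}$ for every integer assignment $\chi \in \Xassign_N$. The technical heart is the following observation: if $\hat z \in \Passign_N$ has $\hat z_{i,j} = 1$, then writing $\hat z$ as a convex combination of integer assignments forces every assignment with positive weight to satisfy $\chi_{i,j} = 1$; convexity of $g^{(i,j)}$ together with the exactness above then gives $g^{(i,j)}(\hat z) \le \sum_{k,\ell} q_{i,j,k,\ell}\, \hat z_{k,\ell}$, and the analogous argument for $\hat z_{i,j} = 0$ gives $g^{(i,j)}(\hat z) \le 0$ (using $q \ge 0$).

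To conclude for fractional entries $0 < x^*_{i,j} < 1$, I would invoke \cref{rem_rlt_calc_z} (equivalently \cref{thm_rlt_characterization}) to obtain $\bar z^{(i,j,1)}, \bar z^{(i,j,0)} \in \Passign_N$ with $\bar z^{(i,j,1)}_{i,j} = 1$, $\bar z^{(i,j,0)}_{i,j} = 0$, $x^* = x^*_{i,j}\,\bar z^{(i,j,1)} + (1 - x^*_{i,j})\,\bar z^{(i,j,0)}$, and $y^*_{i,j,k,\ell} = x^*_{i,j}\,\bar z^{(i,j,1)}_{k,\ell}$. Convexity of $g^{(i,j)}$ and the observation above then yield $\tilde w_{i,j} = g^{(i,j)}(x^*) \le x^*_{i,j}\sum_{k,\ell} q_{i,j,k,\ell}\,\bar z^{(i,j,1)}_{k,\ell} = \sum_{k,\ell} q_{i,j,k,\ell}\, y^*_{i,j,k,\ell} = w^*_{i,j}$. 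For the boundary entries $x^*_{i,j} \in \{0,1\}$ the RLT (McCormick) inequalities of \cref{thm_rlt_elementary}\,\ref{thm_rlt_elementary_mccormick} pin down $y^*_{i,j,\star}$ (to $0$ when $x^*_{i,j}=0$, and to $x^*_{k,\ell}$ when $x^*_{i,j}=1$), so that $w^*_{i,j}$ equals $0$ resp.\ $\sum_{k,\ell} q_{i,j,k,\ell}\, x^*_{k,\ell}$, and the observation applied to $\hat z = x^*$ bounds $\tilde w_{i,j}$ by the same quantity.

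I expect the main obstacle to be the fractional case: relating the convex lower-bounding function $g^{(i,j)}$, evaluated at the fractional point $x^*$, to the RLT product value $\sum_{k,\ell} q_{i,j,k,\ell}\, y^*_{i,j,k,\ell}$. The resolution is precisely where RLT is stronger than the generic coupling inequalities: RLT supplies the conditional split $\bar z^{(i,j,1)}$ whose $(i,j)$-coordinate is exactly $1$, so that in its decomposition into integer assignments the bilinear term $x_{i,j}\cdot(\cdots)$ collapses to a linear one and the integer-exactness of~\eqref{eq_qap_x_w} transfers without loss. Once this decomposition is in hand, the convexity estimate and the boundary bookkeeping are routine.
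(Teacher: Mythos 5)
Your proof is correct, but it reaches \cref{thmQAPdominanceSingle} by a genuinely different route than the paper. The paper never proves this theorem directly: it obtains it as a corollary of the stronger \cref{thmQAPdominanceColsRows}, whose proof shows that the Adams--Johnson point $(w^\star,x^\star)$ \emph{itself} satisfies the column coupling constraints~\eqref{eq_qap_x_w_cols} by writing, for each column $j$, the pair $(w^\star_{\star,j},x^\star)$ as the convex combination $\sum_{i \,:\, x^\star_{i,j}>0} x^\star_{i,j}\big(\bar v^i,\bar x^i\big)$ with $\bar x^i_{k,\ell} \coloneqq y^\star_{i,j,k,\ell}/x^\star_{i,j}$, and then appealing to linearity of the constraints; this is an $|N|$-point decomposition per column, and it takes validity of the coupling constraints at points with a binary column as the modeling assumption. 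You instead argue per entry $(i,j)$: you use the two-point split $x^* = x^*_{i,j}\,\bar z^{(i,j,1)} + (1-x^*_{i,j})\,\bar z^{(i,j,0)}$ from \cref{thm_rlt_characterization}/\cref{rem_rlt_calc_z}, exploit convexity of the lower envelope $g^{(i,j)}$ (a maximum of affine functions), and---this is your genuinely new ingredient---derive validity of~\eqref{eq_qap_x_w} on the faces $x_{i,j}=\beta$ of the assignment polytope from exactness at \emph{integer} assignments only, via a Birkhoff--von Neumann decomposition. The central construction (rescaling $y^\star_{i,j,\star,\star}$ by $1/x^\star_{i,j}$ to land on the face $x_{i,j}=1$) is the same in both arguments, but the payoffs differ: the paper's per-column decomposition is strong enough for the column/row constraints~\eqref{eq_qap_x_w_cols_rows} of \cref{thmQAPdominanceColsRows}, where your per-entry split would not suffice, while your argument is self-contained for \cref{thmQAPdominanceSingle}, works from the weakest natural correctness assumption on~\eqref{eq_qap_x_w} (exactness at integer assignments rather than on faces), and puts the geometric characterization of \cref{thm_rlt_characterization} to concrete use. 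Two routine verifications should still be made explicit in your write-up: that every feasible point of~\eqref{model_qap_aj} indeed yields a point of $\RLTweakEF_{N\times N}$ applied to the assignment polytope, so that \cref{rem_rlt_calc_z} applies (the paper only asserts this ``essentially''), and that the McCormick-type bounds $0 \le y^*_{i,j,k,\ell} \le x^*_{i,j}$ you use for the boundary entries follow from~\eqref{model_qap_aj_first} together with~\eqref{model_qap_aj_symmetry_nonneg}, since \cref{thm_rlt_elementary}~\ref{thm_rlt_elementary_mccormick} is stated for systems literally of the form~\eqref{eq_rlt_weak}; both checks are easy.
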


The dominance over the Xia-Yuan linearization~\cite{XiaY06} was already established in~\cite{HuberR17}.
Since RLT dominates lift-and-project, this result may be not surprising since~\eqref{eq_qap_coupling_implication} becomes linear in $x$ if only a single variable $x_{i,j}$ is fixed.

In fact we can prove a stronger dominance relation, which yields \cref{thmQAPdominanceSingle} as a consequence.
For this we consider a third class of formulations that may involve two types of coupling constraints
\begin{subequations}
  \label{eq_qap_x_w_cols_rows}
  \begin{alignat}{7}
    \sum_{i \in N} a^{(i,j)} w_{i,j} &\geq B^{(j)} x + c^{(j)} &\quad& \forall j \in N \label{eq_qap_x_w_cols} \\
    \sum_{j \in N} d^{(i,j)} w_{i,j} &\geq E^{(i)} x + f^{(i)} &\quad& \forall i \in N, \label{eq_qap_x_w_rows}
  \end{alignat}
\end{subequations}
where~\eqref{eq_qap_x_w_cols} for a particular $j \in N$ implies~\eqref{eq_qap_coupling_implication} for all $i \in N$ simultaneously (assuming $x_{\star,j}$ is binary), and where~\eqref{eq_qap_x_w_rows} for a particular $i \in N$ implies~\eqref{eq_qap_coupling_implication} for all $j \in N$ simultaneously (assuming $x_{i,\star}$ is binary).
Such formulations can be stronger than~\eqref{model_qap_compact} since it permits inequalities involving more than one $w$-variable.
More precisely, it subsumes all formulations with $x$- and $w$-variables such that each constraint only involves $w_{i,j}$-variables for at most one $i$ or for at most one $j$.
Such a formulation reads
\begin{equation}
  \text{min } \sum_{i,j \in N} w_{i,j} \text{ s.t. } x \in \Xassign_N,~ (w,x) \text{ satisfies~\eqref{eq_qap_x_w_cols_rows}}.
  \label{model_qap_cols_rows}
\end{equation}

\begin{theorem}
  \label{thmQAPdominanceColsRows}
  The LP bound of~\eqref{model_qap_aj} is greater than or equal to that of any formulation of the form~\eqref{model_qap_cols_rows}.
\end{theorem}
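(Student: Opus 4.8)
The plan is to prove the inequality between optimal values by a solution-mapping argument: I would take any point $(x,y,w)$ feasible for the LP relaxation of the Adams--Johnson formulation~\eqref{model_qap_aj} and produce, with the \emph{same} $x$, a point $(x,\tilde w)$ feasible for~\eqref{model_qap_cols_rows} of equal objective value. Since both problems minimize $\sum_{i,j} w_{i,j}$, applying this to an AJ-optimal solution shows that the minimum of~\eqref{model_qap_cols_rows} is at most the AJ bound, which is exactly the claimed domination. The natural choice is $\tilde w_{i,j} \coloneqq \sum_{k,\ell} q_{i,j,k,\ell} y_{i,j,k,\ell}$, which by~\eqref{model_qap_aj_w} equals $w_{i,j}$, so $\sum_{i,j}\tilde w_{i,j}$ is precisely the AJ objective; the whole content of the proof is then to verify that $(x,\tilde w)$ satisfies~\eqref{eq_qap_x_w_cols} and~\eqref{eq_qap_x_w_rows}.

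The key construction is the column/row decomposition of $x$ encoded by $y$. For each $(i,j)$ with $x_{i,j}>0$ I would set $z^{(i,j)}_{k,\ell} \coloneqq y_{i,j,k,\ell}/x_{i,j}$. Using~\eqref{model_qap_aj_first},~\eqref{model_qap_aj_second},~\eqref{model_qap_aj_diag},~\eqref{model_qap_aj_symmetry_nonneg} and $x\in\Passign_N$, one checks that $z^{(i,j)}\in\Passign_N$ with $z^{(i,j)}_{i,j}=1$, so $z^{(i,j)}$ lies on the face $F_{i,j}\coloneqq\{z\in\Passign_N\mid z_{i,j}=1\}$. Crucially,~\eqref{model_qap_aj_first} gives $\sum_i x_{i,j}z^{(i,j)}=x$ for each fixed $j$, while~\eqref{model_qap_aj_second} gives $\sum_j x_{i,j}z^{(i,j)}=x$ for each fixed $i$, and the assignment equations give $\sum_i x_{i,j}=\sum_j x_{i,j}=1$. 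Thus $x$ is a convex combination of the points $z^{(i,j)}$ along any column and along any row.

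It remains to establish the coupling inequalities; consider~\eqref{eq_qap_x_w_cols} for a fixed $j$ (the row case~\eqref{eq_qap_x_w_rows} is symmetric). On the face $F_{i^*,j}$ the assignment constraint forces $z_{i,j}=0$ for $i\neq i^*$, so the values prescribed by~\eqref{eq_qap_coupling_implication} are \emph{affine} in $z$: one has $w_{i^*,j}=\sum_{k,\ell} q_{i^*,j,k,\ell} z_{k,\ell}$ and $w_{i,j}=0$ for $i\neq i^*$. Since~\eqref{model_qap_cols_rows} is a valid formulation, at every vertex of $F_{i^*,j}$—a permutation matrix—these coupling values are feasible, hence satisfy~\eqref{eq_qap_x_w_cols}; as the inequality with these values substituted is affine in $z$, it holds on all of $F_{i^*,j}$, in particular at $z^{(i^*,j)}$, yielding
\[
  a^{(i^*,j)} \sum_{k,\ell} q_{i^*,j,k,\ell} z^{(i^*,j)}_{k,\ell} \;\geq\; B^{(j)} z^{(i^*,j)} + c^{(j)}.
\]
Multiplying by $x_{i^*,j}\geq 0$ turns the left-hand side into $a^{(i^*,j)}\tilde w_{i^*,j}$ (because $x_{i^*,j}z^{(i^*,j)}_{k,\ell}=y_{i^*,j,k,\ell}$), and summing over $i^*$ while using $\sum_{i^*}x_{i^*,j}z^{(i^*,j)}=x$ and $\sum_{i^*}x_{i^*,j}=1$ collapses the right-hand side to $B^{(j)}x+c^{(j)}$, which is exactly~\eqref{eq_qap_x_w_cols}. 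Indices with $x_{i^*,j}=0$ contribute nothing, since the McCormick bound from \cref{thm_rlt_elementary}\ref{thm_rlt_elementary_mccormick} forces $y_{i^*,j,k,\ell}=0$ there.

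The main obstacle is the middle step: upgrading validity of the column inequality from integer assignments to the entire face $F_{i^*,j}$ after substituting the coupling $w$. This is where the integrality of the Birkhoff polytope (so that the vertices of $F_{i^*,j}$ are permutation matrices) and the affineness of~\eqref{eq_qap_coupling_implication} once the single nonzero entry of column $j$ is fixed are both essential, and it is precisely the ``at most one $i$ or at most one $j$'' restriction in~\eqref{eq_qap_x_w_cols_rows} that guarantees each inequality sees only the coupling values of one column or one row. Conceptually this is the RLT domination of the cardinality-equation disjunction $\sum_i x_{i,j}=1$, which is made precise by \cref{thm_rlt_characterization} and the general results of \cref{sec_dominance}.
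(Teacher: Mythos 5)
Your proof is correct, and its core is the same as the paper's: your $z^{(i,j)} = y_{i,j,\star,\star}/x_{i,j}$ are exactly the vectors $\bar{x}^i$ constructed in the paper's proof, your column coupling values are its $\bar{v}^i$, and both arguments conclude by writing $(x,w_{\star,j})$ as the convex combination $\sum_{i\,:\,x_{i,j}>0} x_{i,j}\,(z^{(i,j)},\bar{v}^i)$ and using linearity of~\eqref{eq_qap_x_w_cols}, the row case being symmetric. The one step you handle genuinely differently is the key validity claim. The paper asserts outright that any pair obeying~\eqref{eq_qap_coupling_implication} in column $j$, with that column binary, satisfies~\eqref{eq_qap_x_w_cols}; in effect it builds validity at \emph{partially} integral points into its reading of the formulation class~\eqref{model_qap_cols_rows}. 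You instead deduce this from the minimal assumption that the formulation is correct at integral assignments only: with column $j$ fixed, the prescribed $w$-values are affine in $z$ on the face of the assignment polytope given by $z_{i^*,j}=1$, the vertices of that face are permutation matrices by integrality of the Birkhoff polytope, and an affine inequality valid at all vertices of a polytope holds on the whole polytope. This vertex-affineness step is precisely what closes the gap between ``valid formulation of the QAP'' and the property the paper's proof invokes, so your argument requires a weaker hypothesis on the formulation class and is, in that respect, more self-contained. Two small corrections: your appeal to \cref{thm_rlt_elementary}~\ref{thm_rlt_elementary_mccormick} for $y_{i,j,k,\ell}=0$ when $x_{i,j}=0$ is not literally applicable, since~\eqref{model_qap_aj} is not written in the form~\eqref{eq_rlt_weak}; derive it instead from $0 \le y_{i,j,k,\ell} = y_{k,\ell,i,j} \le \sum_{i'} y_{i',\ell,i,j} = x_{i,j}$, using~\eqref{model_qap_aj_symmetry_nonneg} and~\eqref{model_qap_aj_first}. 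Also, your closing remark that the theorem is ``made precise'' by the results of \cref{sec_dominance} overstates the connection: as the paper itself notes in its conclusions, \cref{thmQAPdominanceColsRows} is \emph{not} a consequence of \cref{thm_rlt_dominates_disjunctions}, because the $w$-variables do not arise from an RLT reformulation of the QAP itself; your proof, like the paper's, rightly does not rely on that reduction.
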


\begin{proof}
  Let $(x^\star,y^\star,w^\star) \in \R^{N \times N} \times \R^{N \times N \times N \times N} \times \R^{N \times N}$ be a feasible relaxation solution to~\eqref{model_qap_aj}.
  By~\eqref{model_qap_aj_assignment}, $x^\star \in \Passign_N$, so it remains to show~\eqref{eq_qap_x_w_cols_rows}.
  By symmetry of the formulation with respect to exchanging $i$ and $j$ (as well as $k$ and $\ell$), it suffices to show that~\eqref{eq_qap_x_w_cols} holds.
  To this end, we consider any $j \in N$.
  For each $i \in I^+ \coloneqq \{ i \in N \mid x^\star_{i,j} > 0 \}$ we construct vectors $\bar{x}^i \in \R^{N \times N}$ and $\bar{v}^i \in \R^N$ via
  \begin{alignat*}{7}
    \bar{x}^i_{k,\ell} &\coloneqq \frac{ 1 }{ x^\star_{i,j} } y^\star_{i,j,k,\ell} &\quad& \forall k,\ell \in N \\
    \bar{v}^i_i &\coloneqq w^\star_{i,j} / x^\star_{i,j} \\ 
    \bar{v}^i_{i'} &\coloneqq 0 &\quad& \forall i' \in N \setminus \{i\}
  \end{alignat*}
  They satisfy
  \begin{alignat*}{7}
    \sum_{k \in N} \bar{x}^i_{k,\ell} &= \sum_{k \in N} y^\star_{i,j,k,\ell} / x^\star_{i,j} \overset{\eqref{model_qap_aj_first}}{=} 1 \quad&& \forall \ell \in N, \\
    \sum_{\ell \in N} \bar{x}^i_{k,\ell} &= \sum_{\ell \in N} y^\star_{i,j,k,\ell} / x^\star_{i,j} \overset{\eqref{model_qap_aj_second}}{=} 1 \quad&& \forall k \in N, \\
    \bar{x}^i_{i,j} &= \frac{ y^\star_{i,j,i,j} }{ x^\star_{i,j} } \overset{ \eqref{model_qap_aj_diag} }{ = } 1 \\
    \bar{x}^i_{k,\ell} &= y^\star_{i,j,k,\ell} / x^\star_{i,j} \overset{\eqref{model_qap_aj_symmetry_nonneg}}{\geq} 0  \quad&& \forall k,\ell \in N, \text{ and} \\
    \bar{v}^i_i &= \frac{ w^\star_{i,j} }{ x^\star_{i,j} }
    \overset{ \eqref{model_qap_aj_w} }{ = } \frac{ 1 }{ x^\star_{i,j} } \sum_{k,\ell \in N} q_{i,j,k,\ell} y^\star_{i,j,k,\ell}
    =&& \sum_{k,\ell \in N} q_{i,j,k,\ell} \bar{x}^i_{k,\ell}.
  \end{alignat*}
  Hence, $\bar{x}^i \in \Passign$ with $\bar{x}^i_{i,j} = 1$ and consequently $\bar{x}^i_{k,j} = 0$ for all $k \in N \setminus \{i\}$.
  Thus, $(\bar{x}^i, \bar{v}^i)$ satisfies
  \begin{equation*}
    \bar{v}^i_{i'} = \bar{x}^i_{i',j} \sum_{k,\ell \in N} q_{i',j,k,\ell} \bar{x}^i_{k,\ell},
  \end{equation*}
  for all $i' \in N$, which corresponds to~\eqref{eq_qap_coupling_implication} for $w_{\star,j} \coloneqq \bar{v}^i$.
  Any such vector $(w(\bar{v}^i),\bar{x}^i)$ will thus satisfy~\eqref{eq_qap_x_w_cols} since these inequalities only depend on $w_{k,\ell}$ for $\ell = j$).
  Moreover, note that
  \begin{equation*}
    x^\star_{k,\ell} 
    \overset{ \eqref{model_qap_aj_first} }{=} \sum_{ i \in N } y^\star_{i,j,k,\ell}
    = \sum_{ i \in I^+ } y^\star_{i,j,k,\ell}
    = \sum_{ i \in I^+ } x^\star_{i,j} \cdot \bar{x}^i_{k,\ell}
  \end{equation*}
  holds for all $k,\ell \in N$, where the second equation holds since $i \in N \setminus I^+$ means $x^\star_{i,j} = 0$, which implies $y^\star_{i,j,k,\ell} = 0$.
  Finally, for $i \in N$ we have $w^\star_{i,j} = x^\star_{i,j} \cdot \bar{v}^i$.
  Together, this shows that $(w^\star_{\star,j}, x^\star)$ is a convex combination of the vectors $(\bar{v}^i,\bar{x}^i)$ for all $i \in I^+$.
  Consequently, also $(w^\star,x^\star)$ satisfies the constraints~\eqref{eq_qap_x_w_cols}, which concludes the proof.
  \qed
\end{proof}
In the next section we investigate in a general setting which disjunctive hulls are dominated by an RLT formulation.

\section{Disjunctive hulls dominated by RLT}
\label{sec_dominance}

\Cref{thmQAPdominanceColsRows} exhibits an example in which an RLT closure dominates disjunctive hulls for disjunctions that are more complicated than a variable disjunction.
In this section we investigate which types of disjunctions are implied by (which) RLT closures.
In fact we characterize disjunctions whose disjunctive hull is dominated by $\RLTstrong(\cdot)$, while we provide necessary conditions for the case of $\RLTweak(\cdot)$.

\begin{theorem}
  \label{thm_cardinality_equation}
  Let $P \subseteq [0,1]^N$ be a polyhedron for which $\sum_{j \in J} x_j = 1$ holds for some index set $J \subseteq B$.
  Then
  \begin{equation*}
    \RLTstrong_B(P) \subseteq \conv \Big( \bigcup_{j \in J} \{ x \in P \mid x_j = 1 \} \Big).
  \end{equation*}
\end{theorem}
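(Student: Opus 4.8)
The plan is to realize the given point $x \in \RLTstrong_B(P)$ as an explicit convex combination of the lift points $\bar{z}^{(j,1)} = \frac{1}{x_j} y_{\star,j}$ from \cref{rem_rlt_calc_z}, using the fractional coordinates $x_j$ themselves as the convex weights. Since $\sum_{j \in J} x_j = 1$ and $x \in \RLTstrong_B(P) \subseteq P \subseteq [0,1]^N$ forces $x_j \geq 0$, these weights already form a convex combination, so the entire argument reduces to proving the single identity $x = \sum_{j \in J} y_{\star,j}$.

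First I would fix a witness $y \in \R^{N \times N}$ with $(x,y)$ satisfying~\eqref{eq_rlt_strong}. By part~\ref{thm_rlt_elementary_redundant} of \cref{thm_rlt_elementary} the closure does not depend on the description, so I may assume that the two inequalities $\sum_{j \in J} x_j \leq 1$ and $-\sum_{j \in J} x_j \leq -1$ encoding the cardinality equation appear among the rows of $Ax \leq b$. Applying~\eqref{eq_rlt_first} to each of them for an arbitrary multiplier index $i \in N$ gives $\sum_{j \in J} y_{j,i} \leq x_i$ and $\sum_{j \in J} y_{j,i} \geq x_i$, hence $\sum_{j \in J} y_{j,i} = x_i$; the symmetry~\eqref{eq_rlt_symmetry} turns this into $\sum_{j \in J} y_{i,j} = x_i$. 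Letting $i$ range over all of $N$ yields exactly $x = \sum_{j \in J} y_{\star,j}$. This is the crucial step, and it is where the strong closure is essential: multiplication by every $i \in N$ together with full symmetry is what makes the identity hold in every coordinate, which $\RLTweak$ would not guarantee for $i \in N \setminus B$.

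It then remains to identify the summands. For $j \in J$ with $x_j > 0$, inequality~\eqref{eq_rlt_first} gives $A y_{\star,j} \leq b x_j$, so $\bar{z}^{(j,1)} \in P$, while~\eqref{eq_rlt_diag} (available because $J \subseteq B$) gives $y_{j,j} = x_j$ and hence $\bar{z}^{(j,1)}_j = 1$; thus $\bar{z}^{(j,1)} \in \{ z \in P \mid z_j = 1 \}$. For $j \in J$ with $x_j = 0$, the McCormick bounds $0 \leq y_{i,j} \leq x_j$ from part~\ref{thm_rlt_elementary_mccormick} of \cref{thm_rlt_elementary} force $y_{\star,j} = 0$, so these terms vanish. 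Combining, $x = \sum_{j \in J} y_{\star,j} = \sum_{j \in J:\, x_j > 0} x_j \, \bar{z}^{(j,1)}$ exhibits $x$ as a convex combination of points in $\bigcup_{j \in J} \{ z \in P \mid z_j = 1 \}$, which is the claim. I expect the only genuine difficulty to be recognizing and justifying the decomposition $x = \sum_{j \in J} y_{\star,j}$; the rest is bookkeeping.
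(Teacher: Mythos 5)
Your proposal is correct and follows essentially the same route as the paper's own proof: both exhibit $x$ as the convex combination $\sum_{j \in J,\, x_j > 0} x_j \cdot \tfrac{1}{x_j} y_{\star,j}$, obtain the key identity $\sum_{j \in J} y_{j,i} = x_i$ by treating the cardinality equation as (redundant) rows of $Ax \leq b$ via \cref{thm_rlt_elementary}~\ref{thm_rlt_elementary_redundant} and then applying~\eqref{eq_rlt_first} and the symmetry~\eqref{eq_rlt_symmetry}, and kill the terms with $x_j = 0$ using the McCormick bounds of \cref{thm_rlt_elementary}~\ref{thm_rlt_elementary_mccormick}. Your closing observation about where the argument needs $\RLTstrong_B$ rather than $\RLTweak_B$ also matches the remark the paper makes immediately after the proof.
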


\begin{proof}
  We assume that $P = \{ x \in \R^N \mid Ax \leq b \}$ for some matrix $A \in \R^{M \times N}$ and $b \in \R^M$.
  Let $x \in \RLTstrong_B(P)$ and let $y \in \R^{N \times N}$ be a corresponding vector such that~\eqref{eq_rlt_strong} holds.
  Let $J^+ \coloneqq \{ j \in J \mid x_j > 0 \}$ index those variables of the equation that are positive.
  For each $j \in J^+$ we define a vector $z^j \coloneqq 1/x_j \cdot y_{\star,j} \in \R^N$.
  From~\eqref{eq_rlt_first} we obtain $Az^j = 1/x_j \cdot Ay_{\star,j} \leq 1/x_j \cdot b x_j = b$, that is, $z^j \in P$ holds.
  Moreover, $z^j_j = y_{j,j} / x_j = 1$ holds due to~\eqref{eq_rlt_diag}.
  Hence, it remains to show that $x$ is a convex combination of the points $z^j$ for $j \in J^+$.
  For each $i \in N$ we observe
  \begin{equation}
    \sum_{j \in J^+} x_j \cdot z^j_i
    = \sum_{j \in J^+} y_{i,j}
    = \sum_{j \in J} y_{i,j}
    = \sum_{j \in J} y_{j,i}
    = 1 \cdot x_i,
    \label{eq_cardinality_equation_convex}
  \end{equation}
  where the second equality follows from~\eqref{eq_rlt_symmetry}, the third from the fact that $0 \leq y_{i,j} \leq x_j = 0$ follows from \cref{thm_rlt_elementary}~\ref{thm_rlt_elementary_mccormick} for each $j \in J \setminus J^+$ and the last one from the fact that $\sum_{j \in J} y_{j,i} = x_i$ is valid for $(x,y)$, which can be derived from $\sum_{j \in J} x_j = 1$ by \cref{thm_rlt_elementary}~\ref{thm_rlt_elementary_redundant}.
  \qed
\end{proof}

Note that the very last step in the proof of \cref{thm_cardinality_equation} does not apply if we replace $\RLTstrong_B(P)$ by $\RLTweak_B(P)$.
Unfortunately, we are not aware of an example showing that a similar theorem does not hold for $\RLTweak_B(P)$.
However, the example in \cref{fig_landp_does_not_dominate} shows that lift-and-project does \emph{not} dominate such disjunctive hulls.

\begin{figure}[htpb]
  
  \subfloat[Section of $P$ at $x_4 = 0$.]{%
    \label{fig_landp_does_not_dominate_first}
    \centering
    \tdplotsetmaincoords{60}{120}
    \begin{tikzpicture}[
      tdplot_main_coords,
    ]
    \draw[-latex] (0,0,0) -- (2.5,0,0) node[anchor=east] {$x_1$};
    \draw[-latex] (0,0,0) -- (0,2.5,0) node[anchor=west] {$x_2$};
    \draw[-latex] (0,0,0) -- (0,0,2.5) node[anchor=south] {$x_3$};

    \draw[draw=red, very thick, fill=red, fill opacity=0.15] (2,0,0) -- (0,2,0) -- (0,0,2) -- cycle;

    \node[circle, draw=black, fill=black, inner sep=0.4mm] (unit1) at (2,0,0) {};
    \node[above left=15pt of unit1, inner sep=0] (unit1label) {$\begin{bmatrix} 1 \\ 0 \\ 0 \\ 0 \end{bmatrix}$};
    \draw[dashed] (unit1) -- (unit1label);

    \node[circle, draw=black, fill=black, inner sep=0.4mm] (unit2) at (0,2,0) {};
    \node[above right=15pt of unit2, inner sep=0] (unit2label) {$\begin{bmatrix} 0 \\ 1 \\ 0 \\ 0 \end{bmatrix}$};
    \draw[dashed] (unit2) -- (unit2label);

    \node[circle, draw=black, fill=black, inner sep=0.4mm] (unit3) at (0,0,2) {};
    \node[left=10pt of unit3, inner sep=0] (unit3label) {$\begin{bmatrix} 0 \\ 0 \\ 1 \\ 0 \end{bmatrix}$};
    \draw[dashed] (unit3) -- (unit3label);

    \node[below=1pt of unit1] {$1$};
    \node[below=1pt of unit2] {$1$};
    \node[right=1pt of unit3] {$1$};

    \end{tikzpicture}
  }%
  \hfill
  \subfloat[Section of $P$ at $x_4 = 1$.]{%
    \label{fig_landp_does_not_dominate_second}
    \centering
    \tdplotsetmaincoords{60}{120}
    \begin{tikzpicture}[
      tdplot_main_coords,
    ]
    \draw[-latex] (0,0,0) -- (2.5,0,0) node[anchor=east] {$x_1$};
    \draw[-latex] (0,0,0) -- (0,2.5,0) node[anchor=west] {$x_2$};
    \draw[-latex] (0,0,0) -- (0,0,2.5) node[anchor=south] {$x_3$};

    \draw[draw=black, thick, fill=black, draw opacity=0.1, fill opacity=0.05] (2,0,0) -- (0,2,0) -- (0,0,2) -- cycle;
    \draw[draw=red, very thick, fill=red, fill opacity=0.15] (1,1,0) -- (0,1,1) -- (1,0,1) -- cycle;

    \node[circle, draw=black, fill=black, inner sep=0.4mm] (vertex1) at (1,0,1) {};
    \node[above left=15pt of unit1, inner sep=0] (vertex1label) {$\begin{bmatrix} 1/2 \\ 0 \\ 1/2 \\ 1 \end{bmatrix}$};
    \draw[dashed] (vertex1) -- (vertex1label);

    \node[circle, draw=black, fill=black, inner sep=0.4mm] (vertex2) at (0,1,1) {};
    \node[right=12pt of vertex2, inner sep=0] (vertex2label) {$\begin{bmatrix} 0 \\ 1/2 \\ 1/2 \\ 1 \end{bmatrix}$};
    \draw[dashed] (vertex2) -- (vertex2label);

    \node[circle, draw=black, fill=black, inner sep=0.4mm] (vertex3) at (1,1,0) {};
    \node[below right=10pt of vertex3, inner sep=0] (vertex3label) {$\begin{bmatrix} 1/2 \\ 1/2 \\ 0 \\ 1 \end{bmatrix}$};
    \draw[dashed] (vertex3) -- (vertex3label);

    \end{tikzpicture}
  }%
  \caption{%
    Consider the polytope $P \subseteq \R^N$ with $N = \{1,2,3,4\}$ defined as the convex hull of the three vertices in \cref{fig_landp_does_not_dominate_first} and the three vertices in \cref{fig_landp_does_not_dominate_second}.
    On the one hand, the equation $x_1 + x_2 + x_3 = 1$ is valid for $P$, and hence \cref{thm_cardinality_equation} shows that $\RLTstrong_N(P)$ is (contained in and thus) equal to the disjunctive hull for the disjunction $x_1 = 1 \lor x_2 = 1 \lor x_3 = 1$, which is the triangle in \cref{fig_landp_does_not_dominate_first}.
    On the other hand, the point $x^\star = \transpose{ \big( \frac{1}{3}, \frac{1}{3}, \frac{1}{3}, \frac{2}{3} \big) }$ belongs to $\LandP_N(P)$.
  }
  \label{fig_landp_does_not_dominate}
\end{figure}

\DeclareDocumentCommand\red{m}{\textcolor{red}{#1}}
\DeclareDocumentCommand\blue{m}{\textcolor{blue}{#1}}

\begin{figure}[htpb]
  \subfloat[%
    Section of $P$ at $x_1 = 0$.
    \label{fig_rlt_does_not_dominate_ineq_picture}
  ]{%
    \centering
    \tdplotsetmaincoords{80}{17}
    \begin{tikzpicture}[
      tdplot_main_coords,
    ]
    \draw[-latex] (0,0,0) -- (2.5,0,0) node[anchor=north] {$x_2$};
    \draw[-latex] (0,0,0) -- (0,5.0,0) node[anchor=west] {$x_3$};
    \draw[-latex] (0,0,0) -- (0,0,2.5) node[anchor=west] {$x_4$};

    \draw[draw=black, very thick] (1,0,0) -- (2,0,0) -- (0,0,2) -- (0,2,0) -- (0,1,0);
    \draw[draw=red, very thick, fill=red, fill opacity=0.15] (2,0,0) -- (0,2,0) -- (0,0,2) -- cycle;
    \draw[draw=blue, very thick, fill=blue, fill opacity=0.15] (1,0,0) -- (0,1,0) -- (0,0,2) -- cycle;

    \node[circle, draw=black, fill=black, inner sep=0.4mm] (unit1) at (2,0,0) {};
    \node[above right=15pt of unit1, inner sep=0] (unit1label) {$\begin{bmatrix} 0 \\ 1 \\ 0 \\ 0 \end{bmatrix}$};
    \draw[dashed] (unit1) -- (unit1label);

    \node[circle, draw=black, fill=black, inner sep=0.4mm] (unit2) at (0,2,0) {};
    \node[above right=30pt of unit2, inner sep=0] (unit2label) {$\begin{bmatrix} 0 \\ 0 \\ 1 \\ 0 \end{bmatrix}$};
    \draw[dashed] (unit2) -- (unit2label);

    \node[circle, draw=black, fill=black, inner sep=0.4mm] (unit3) at (0,0,2) {};
    \node[left=10pt of unit3, inner sep=0] (unit3label) {$\begin{bmatrix} 0 \\ 0 \\ 0 \\ 1 \end{bmatrix}$};
    \draw[dashed] (unit3) -- (unit3label);

    \node[circle, draw=black, fill=black, inner sep=0.4mm] (extra2) at (0,1,0) {};
    \node[left=14pt of extra2, inner sep=0] (extra2label) {$\begin{bmatrix} 0 \\ 0 \\ 1/2 \\ 0 \end{bmatrix}$};
    \draw[dashed] (extra2) -- (extra2label);

    \node[circle, draw=black, fill=black, inner sep=0.4mm] (extra1) at (1,0,0) {};
    \node[below=10pt of extra1, inner sep=0] (extra1label) {$\transpose{[0,1/2,0,0]}$};
    \draw[dashed] (extra1) -- (extra1label);

    \end{tikzpicture}
  }%
  \hfill
  \subfloat[%
    The point $(\hat{x},\hat{y}) \in \RLTstrongEF_B(P)$.
    \label{fig_rlt_does_not_dominate_ineq_point}
  ]{%
    $\hat{x} = \begin{bmatrix}
      1/4 \\ 1/4 \\ 1/4 \\ 0
    \end{bmatrix},~ \hat{y} = \begin{bmatrix}
      1/4 & 0 & 0 & 0 \\
      0 & 1/4 & 0 & 0 \\
      0 & 0 & 1/4 & 0 \\
      0 & 0 & 0 & 0 \\
    \end{bmatrix}$ \\[5mm]
  }%

\vspace{2mm}
  \centering%
  \subfloat[%
    RLT inequalities~\eqref{eq_rlt_first} and~\eqref{eq_rlt_second} other than McCormick inequalities.
    \label{fig_rlt_does_not_dominate_ineq_rlt}
  ]{%
    $\begin{aligned}
     &\red{y_{1,j}} &\red{+}\;& \red{y_{2,j}} &\red{+}\;& \red{y_{3,j}} && &\red{\leq}\;& \red{x_j} && \red{\forall j \in N} \\
     &\red{x_1 - y_{1,j}} &\red{+}\;& \red{x_2 - y_{2,j}} &\red{+}\;& \red{x_3 - y_{3,j}} && &\red{\leq}\;& \red{1 - x_j} && \red{\forall j \in N} \\
     &\blue{2y_{1,j}} &\blue{+}\;& \blue{2y_{2,j}} &\blue{+}\;& \blue{2y_{3,j}} &\blue{+}\;& \blue{y_{4,j}} &\blue{\geq}\;& \blue{x_j} && \blue{\forall j \in N} \\
     &\blue{2x_1 - 2y_{1,j}} &\blue{+}\;& \blue{2x_2 - 2y_{2,j}} &\blue{+}\;& \blue{2x_3 - 2y_{3,j}} &\blue{+}\;& \blue{x_4 - y_{4,j}} &\blue{\geq}\;& \blue{1 - x_j} && \blue{\forall j \in N}
    \end{aligned}$
  }%
  \caption{%
    We consider $B = N = \{1,2,3,4\}$ and the polytope \newline
    \begin{minipage}{1.0\textwidth}
      \[
        P \coloneqq \{ x \in [0,1]^N \mid \red{x_1 + x_2 + x_3 + x_4 \leq 1},~ \blue{2x_1 + 2x_2 + 2x_3 + x_4 \geq 1} \}.
      \]
      The section of $P$ at $x_1 = 0$ is depicted in \cref{fig_rlt_does_not_dominate_ineq_picture}.
      The associated disjunction for $D = \{1,2,3\}$ is given by
      $
        x_1 = 1 \lor x_2 = 1 \lor x_3 = 1 \lor (x_1 = x_2 = x_3 = 0),
      $
      that is, $\bar{X}$ consists of the three unit vectors and the origin in $\R^D$.
      It is well-known that $\conv(\bar{X})$ is described by nonnegativity constraints and the inequality $x_1 + x_2 + x_3 \leq 1$, which is valid for $P$.
      Hence, $P \subseteq \conv(\bar{X}) \times [0,1]$.
      The point $\hat{x}$ specified in \cref{fig_rlt_does_not_dominate_ineq_point} lies in $\RLTstrong_B(P)$ since it can be lifted to $(\hat{x},\hat{y}) \in \RLTstrongEF_B(P)$.
      The RLT inequalities~\eqref{eq_rlt_first} and~\eqref{eq_rlt_second} are inequalities~\eqref{eq_mccormick} and those in \cref{fig_rlt_does_not_dominate_ineq_rlt}.
      However, $\hat{x}$ does not lie in the disjunctive hull, which is 
      $\{ x \in [0,1]^4 \mid x_1 + x_2 + x_3 + x_4 = 1 \}$.
    \end{minipage}
    \label{fig_rlt_does_not_dominate_ineq}
  }
\end{figure}

Although the RLT formulation~\eqref{model_qap_aj} in \cref{sec_qap} does not involve linearizations of products of the continuous $w$-variables and the binaries $x$, we were able to establish the dominance in \cref{thmQAPdominanceColsRows}.
We believe that the key property there is that there is a unique optimal $w$-vector for each given $x$-vector, which leaves no room for violating a corresponding convex combination constraint~\eqref{eq_cardinality_equation_convex} for the continuous variables.

We now turn to our characterization, which assumes a complete disjunction on a subset $D$ of the (binary) variables.
We essentially show that the only disjunctions that are dominated for any suitable polytope are those in \cref{thm_cardinality_equation} up to complementing and creating copies of variables.
In particular, when replacing the equation $\sum_{j \in J} x_j = 1$ by the inequality $\sum_{j \in J} x_j \leq 1$, the RLT closure $\RLTstrong_B(\cdot)$ need not dominate the corresponding disjunctive hull.
An example of this is depicted in \cref{fig_rlt_does_not_dominate_ineq}.
The example also shows that the condition $P \subseteq \conv(\bar{X}) \times [0,1]^{N \setminus D}$ is important, i.e., it is not sufficient that the projection $P'$ of $P$ onto $\R^D$ satisfies $P' \cap \Z^D = \bar{X}$.

In order to state our characterization, we introduce the notation
$F^{(j,\beta)} \coloneqq \{ x \in [0,1]^n \mid x_j = \beta \}$ for $j \in N$ and $\beta \in \{0,1\}$ for the faces of the $0/1$-cube.

\begin{theorem}
  \label{thm_rlt_dominates_disjunctions}
  Let $D \subseteq B \subseteq N$ and let $\bar{X} \subseteq \{0,1\}^D$.
  Then
  \begin{equation}
    \RLTstrong_B(P) \subseteq \conv\left( \bigcup_{\bar{x} \in \bar{X}} \{ x \in P \mid x_{D} = \bar{x} \} \right) \label{eq_rltstrong_dominates_disjunctions}
  \end{equation}
  holds for all polyhedra $P \subseteq \conv(\bar{X}) \times [0,1]^{N \setminus D}$ if and only if for every $\bar{x} \in \bar{X}$ there exists a $j(\bar{x}) \in D$ such that $\bar{X} \cap F^{(j(\bar{x}),\beta)}  = \{ \bar{x} \}$ for $\beta = \bar{x}_{j(\bar{x})}$.
\end{theorem}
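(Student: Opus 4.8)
The plan is to treat the two directions separately; the forward implication is a direct convex-combination argument that mirrors the proof of \cref{thm_cardinality_equation} after absorbing a per-point orientation $\beta$. Assume the combinatorial condition and fix $x\in\RLTstrong_B(P)$ with a lifting $y$ satisfying \eqref{eq_rlt_strong}. For each $\bar x\in\bar X$ write $j=j(\bar x)$ and $\beta=\bar x_j$, and set the weight $\mu_{\bar x}:=x_j$ if $\beta=1$ and $\mu_{\bar x}:=1-x_j$ if $\beta=0$. A first, purely combinatorial observation is that since $P\subseteq\conv[\bar X]\times[0,1]^{N\setminus D}$ every $x\in P$ has $x_D\in\conv[\bar X]$, and writing $x_D=\sum_{\bar x'}\gamma_{\bar x'}\bar x'$ the distinguishing property $\bar X\cap F^{(j,\beta)}=\{\bar x\}$ forces $\gamma_{\bar x}=\mu_{\bar x}$; hence the $\mu_{\bar x}$ are nonnegative and sum to $1$. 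Next I would produce, for each $\bar x$ with $\mu_{\bar x}>0$, a point $w^{\bar x}\in P$ exactly as in \cref{rem_rlt_calc_z}: $w^{\bar x}:=\tfrac1{x_j}y_{\star,j}$ when $\beta=1$ and $w^{\bar x}:=\tfrac1{1-x_j}(x-y_{\star,j})$ when $\beta=0$. Then \eqref{eq_rlt_first}--\eqref{eq_rlt_second} give $w^{\bar x}\in P$, \eqref{eq_rlt_diag} gives $w^{\bar x}_j=\beta$, and $w^{\bar x}_D\in\conv[\bar X]\cap F^{(j,\beta)}=\{\bar x\}$ yields $w^{\bar x}_D=\bar x$.

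It then remains to verify $x=\sum_{\bar x}\mu_{\bar x}w^{\bar x}$. Writing each $\mu_{\bar x}w^{\bar x}$ in terms of $y$ (namely $y_{\star,j}$ when $\beta=1$ and $x-y_{\star,j}$ when $\beta=0$, with zero-weight points contributing $0$ consistently by the McCormick bounds of \cref{thm_rlt_elementary}~\ref{thm_rlt_elementary_mccormick}) and using symmetry \eqref{eq_rlt_symmetry}, the identity reduces to the equations obtained by RLT-multiplying the valid equation $\sum_{\bar x}\mu_{\bar x}=1$ — rewritten in the $x$-variables as a linear equation valid for $P$ — by each variable $x_i$. These transfer to $(x,y)$ via \cref{thm_rlt_elementary}~\ref{thm_rlt_elementary_redundant}, precisely as in the final step of the proof of \cref{thm_cardinality_equation}. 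This is the step that genuinely requires $\RLTstrong_B$ rather than $\RLTweak_B$, since it uses \eqref{eq_rlt_first}--\eqref{eq_rlt_second} for indices in $N\setminus B$ as well, which confirms why the theorem is stated for the stronger closure.

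For the converse I would argue contrapositively by an explicit construction generalizing \cref{fig_rlt_does_not_dominate_ineq}. First, using \cref{thm_rlt_elementary}~\ref{thm_rlt_elementary_complement} to complement coordinates, I may assume the offending point is $\bar x^\star=\zerovec[D]$; failure of the condition at it reads: for every $j\in D$ there is some $\bar x^{(j)}\in\bar X\setminus\{\zerovec[D]\}$ with $\bar x^{(j)}_j=0$. The aim is a polytope $P\subseteq\conv[\bar X]\times[0,1]^{N\setminus D}$ together with $(\hat x,\hat y)\in\RLTstrongEF_B(P)$ whose projection $\hat x$ leaves the disjunctive hull. The design principle is to reserve one coordinate $c\in N\setminus D$ as an apex marker and shape $P$ so that its face $\{x\in P\mid x_D=\zerovec[D]\}$ forces $x_c=1$ while every other face $\{x\in P\mid x_D=\bar x\}$ forces $x_c=0$; then the disjunctive hull obeys the affine relation ``$x_c=$ weight on $\zerovec[D]$'', which a point $\hat x$ with $\hat x_c=0$ yet positive weight on $\zerovec[D]$ violates. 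Crucially, non-distinguishability is exactly what lets me certify $\hat x\in\RLTstrong_B(P)$ via \cref{thm_rlt_characterization}~\ref{enum_rlt_lift_and_project}: for each fractional coordinate $j$ the component $\bar z^{(j,0)}$ can be chosen near $\bar x^{(j)}$, hence with $x_c=0$ and away from the apex, so no weight is ever placed on the apex face, while the symmetry equations \eqref{eq_rlt_extra1_for2} are arranged by a diagonal lifting as in \cref{fig_rlt_does_not_dominate_ineq_point}.

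I expect this construction in full generality to be the main obstacle. For an arbitrary $\bar X$ — rather than the unit-vector configuration of the figure — one must place enough vertices in $P$ so that every $\bar z^{(j,0)},\bar z^{(j,1)}$ lies in $P$ with the correct $c$-coordinate, ensure the symmetry equations \eqref{eq_rlt_extra1_for2} hold, and still keep $\hat x$ strictly outside $\conv\big(\bigcup_{\bar x\in\bar X}\{x\in P\mid x_D=\bar x\}\big)$; proving this feasibility system solvable is where the real work lies. I would also separately handle the degenerate case $N=D$, where no continuous apex coordinate exists: there one instead excludes the vertex $\zerovec[D]$ from $P$ and argues directly from \cref{thm_rlt_characterization} with $B=N$, so that the disjunctive hull collapses to $\conv[\bar X\cap P]$ and $\hat x$ can be kept outside it. A clean way to organize all of this may be to first prove the statement for the canonical witness of the figure and then reduce the general case to it using the complementing and variable-copying operations mentioned before the theorem.
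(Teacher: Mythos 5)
Your sufficiency direction is essentially correct, and it is organized differently from the paper's: the paper splits into two cases (two points of $\bar X$ sharing a distinguishing index, handled via \cref{thm_rlt_lap_dominance}, versus pairwise distinct indices, handled by complementing so that all $\beta=1$ and then invoking \cref{thm_cardinality_equation}), whereas you absorb the orientations into the weights $\mu_{\bar x}\in\{x_{j(\bar x)},\,1-x_{j(\bar x)}\}$ and rerun the cardinality-equation argument in this mixed form. The steps you cite do go through: the distinguishing indices pin down every coefficient of a representation of $x_D$ over $\bar X$, so $\sum_{\bar x}\mu_{\bar x}=1$ is a linear equation valid for $P$; zero-weight columns vanish by \cref{thm_rlt_elementary}~\ref{thm_rlt_elementary_mccormick}; and the identity $x=\sum_{\bar x}\mu_{\bar x}w^{\bar x}$ follows coordinatewise by multiplying that equation with each $x_i$, $i\in N$ (via \cref{thm_rlt_elementary}~\ref{thm_rlt_elementary_redundant}) and using~\eqref{eq_rlt_symmetry}. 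This buys a uniform treatment without complementation or the two-element special case, and your observation about where $\RLTstrong_B$ rather than $\RLTweak_B$ is needed matches the paper's own remark after \cref{thm_cardinality_equation}.

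The necessity direction, however, has a genuine gap: you never construct the counterexample, and you concede as much (``proving this feasibility system solvable is where the real work lies''). Moreover, the apex mechanism you sketch is internally inconsistent. To escape the disjunctive hull with an apex coordinate $c$ (all pieces $\{x\in P\mid x_D=\bar x\}$, $\bar x\neq\zerovec[D]$, forcing $x_c=0$ and the piece at $\zerovec[D]$ forcing $x_c=1$) while $\hat x_c=0$, you need $\hat x_D\notin\conv[\bar X\setminus\{\zerovec[D]\}]$; otherwise $\hat x$ may well lie in the hull of the non-apex pieces. But membership $\hat x\in\RLTstrong_B(P)$ implies, by \cref{thm_rlt_characterization}~\ref{enum_rlt_lift_and_project}, that $\hat x$ lies on segments $\conv\big\{\bar z^{(i,0)},\bar z^{(i,1)}\big\}$ with both endpoints in $P$, and since $\conv[\bar X\setminus\{\zerovec[D]\}]$ is convex, at least one endpoint must itself have its $D$-part outside that set, i.e., carry positive weight on the apex vertex. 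This contradicts your stipulation that the certificate points can all be chosen ``away from the apex, so no weight is ever placed on the apex face.'' So the design principle, as stated, cannot be realized.

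What the paper actually does is closer to what you dismiss as the ``degenerate case $N=D$,'' and that is the entire proof, not a side case: set $B=N=D$, complement so that the offending point is $x^\star=\onevec[D]$, and truncate it via $P(\alpha)=\big\{x\in\conv[\bar X]\;\big|\;\sum_{i\in D}x_i\leq|D|-\alpha\big\}$, so that the piece at $x^\star$ becomes empty and every point of the disjunctive hull satisfies $\sum_j x_j\leq|D|-1$. The point $\hat x=(1-|N|\varepsilon)x^\star+\varepsilon\sum_{i\in N}\bar x^{(i)}$, with $\bar x^{(i)}\in\bar X\setminus\{x^\star\}$, $\bar x^{(i)}_i=1$ witnessing the failure of the condition, violates this inequality for $\varepsilon<1/|N|^2$. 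The real work, which your proposal does not contain, is the quantitative step showing $\hat x\in\RLTstrong_B(P(\alpha))$ for some $\alpha>0$: starting from an RLT certificate for $P(0)=\conv[\bar X]$ one proves $\hat x_i-\hat y_{j(i),i}\geq\varepsilon$, hence $\hat z^{(i,1)}_{j(i)}\leq 1/(1+\varepsilon)$ by \cref{rem_rlt_calc_z}, so the certificate points of \cref{thm_rlt_characterization} still satisfy the truncation constraint for $\alpha=\varepsilon/(1+\varepsilon)$ (here the equivalence of \ref{enum_rlt_equations} and \ref{enum_rlt_point} for $B=N$ is used). Without this, or a worked-out substitute, the ``only if'' half of the theorem remains unproven.
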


\begin{proof}
  Assume that for $\bar{X} \subseteq \{0,1\}^D$ the assumption holds for every $\bar{x} \in \bar{X}$.
  First consider the case in which for $\bar{x},x^\star \in \bar{X}$ with $\bar{x} \neq x^\star$ we have some $j(\bar{x}),j(x^\star)\in D$ such that $\bar{X} \cap F^{(j(\bar{x}),\bar{x}_j)} = \{ \bar{x} \}$ and $\bar{X} \cap F^{(j(x^\star),x^\star_j)} = \{ x^\star \}$, with $j(\bar{x})=j(x^\star)$. 
  Since $F^{(j,0)} \cup F^{(j,1)} \supseteq \bar{X}$, this implies $\bar{X} = \{ \bar{x}, x^\star \}$.
  The single-variable disjunction for $x_j$ is dominated by $\mathcal{L}_B(P)$ and from \cref{thm_rlt_lap_dominance} also by $\RLTstrong(P)$ .
  
  Therefore, assume this is not the case.  Then for all $\bar{x},x^\star \in \bar{X}$ we have $j(\bar{x}) \neq j(x^\star)$ if $\bar{x} \neq x^\star$.
  By re-indexing coordinates we can assume that one set of such indices $j$ is $\{1,2,\dotsc,k\}$, where $k = |\bar{X}|$.
  By complementing a subset of the variables $x_1,x_2,\dotsc,x_k$, we can also assume that if there is an $\bar{x} \in \bar{X}$ such that $\bar{X}\cap F^{(j,\beta)} = \{\bar{x}\}$ then $\beta = 1$ for all $j$. 
  By \cref{thm_rlt_elementary}~\ref{thm_rlt_elementary_complement}, this does not change the strength of the RLT. 
  Consequently, for each $j \in \{1,2,\dotsc,k\}$ there is a unique $\bar{x}^{(j)} \in \bar{X}$ with $\bar{x}^{(j)}_j=1$ and $\bar{x}^{(j)}_i = 0$ for all $i \in \{1,2,\dotsc,k\} \setminus \{j\}$.
  Note that all points in $\bar{X}$ are of that form.
  Hence, $\sum_{j=1}^k x_j = 1$ is valid for $\bar{X}$.
  Since $P \subseteq \conv(\bar{X}) \times [0,1]^{N\setminus D}$ holds, this equation is also valid for $P$.
  By \cref{thm_cardinality_equation},
  \begin{equation*}
    \RLTstrong_B(P) \subseteq \conv \left( \bigcup_{j =1}^k \{ x \in P \mid x_j = 1 \} \right) =\conv\left( \bigcup_{\bar{x} \in \bar{X}} \{ x \in P \mid x_{D} = \bar{x} \} \right).  
  \end{equation*}

\medskip

  Conversely, assume there exists some $x^\star \in \bar{X}$ such that for every $i \in D$ there is an $x^{(i)} \in \bar{X} \setminus \{x^\star\}$ for which $x^{(i)}_i = x^\star_i$.
  By complementing we can assume $x^\star = \onevec[D]$ without loss of generality.
  We will construct a polyhedron $P(\alpha) \subseteq \R^N$ for $B \coloneqq N \coloneqq D$ and a point $\hat{x} \in \RLTstrong_B(P(\alpha)) \setminus \conv\left( \bigcup_{\bar{x} \in \bar{X}} \{ x \in P(\alpha) \mid x_{D} = \bar{x} \} \right)$.
  For $\alpha \in [0,1]$, let
  \begin{equation*}
    P(\alpha) \coloneqq \left\{ x \in \conv(\bar{X}) \mid \sum_{i \in D} x_i \leq |D| - \alpha \right\}.
  \end{equation*}
  For each $i \in N$, let $\bar{x}^{(i)} \in F^{(i,1)} \cap \bar{X} \setminus \{x^\star\}$.
  Then, because $\bar{x}^{(i)}$ must differ from $x^\star$ in at least one coordinate, there must exist a $j(i) \in N$ such that $\bar{x}^{(i)}_{j(i)} = 0$.
  Moreover, since $P(0) = \conv(\bar{X})$, we have $x^\star,\bar{x}^{(i)} \in \RLTstrong_B(P(0))$ for all $i \in N$.
  Let $(x^\star,y^\star)$ and $(\bar{x}^{(i)},\bar{y}^{(i)})$ for every $i \in N$ be corresponding solutions in the extended formulation $\RLTstrongEF_B(P(0))$.
  Now consider the convex combination
  \begin{equation*}
    (\hat{x},\hat{y}) \coloneqq (1 - |N| \varepsilon) \cdot \left(x^{\star},y^{\star}\right) + \varepsilon \cdot \sum_{i \in N} \left(\bar{x}^{(i)},\bar{y}^{(i)}\right)
  \end{equation*}
  for $0 \leq \varepsilon \leq \frac{1}{|N|}$.
  By convexity, $(\hat{x},\hat{y}) \in \RLTstrongEF_B(P(0))$ follows.
  By \cref{thm_rlt_characterization} there exist points $\hat{z}^{(i,\beta)} \in \{z\in P(0) \mid z_i = \beta \}$ for all $(i,\beta) \in \Bfrac(\hat{x}) \times \{0,1\}$ such that 
  \begin{align*}
    \hat{z}^{(i,1)}_j \cdot \hat{x}_i - \hat{z}^{(j,1)}_i \cdot \hat{x}_j &= 0,\\
    \hat{z}^{(i,0)}_j \cdot (1-\hat{x}_i) - (1-\hat{z}^{(j,1)}_i) \cdot \hat{x}_j &= 0, 
  \end{align*}
  holds for all $i,j \in \Bfrac(\hat{x})$ and
  \begin{equation*}
    \hat{x}_k = \hat{z}^{(i,\beta)}_k
  \end{equation*}
  holds for all $i \in \Bfrac(\hat{x})$, $\beta=0,1$ and all $k \in N \setminus \Bfrac(\hat{x})$.
  We claim that, for all $(i,\beta) \in \Bfrac(\hat{x}) \times \{0,1\}$ and small enough $\alpha$, also $\hat{z}^{(i,\beta)} \in P(\alpha)$. 
  Because $P(0)$ and $P(\alpha)$ differ in only one constraint, we only need to show that the additional inequality is valid for each $\hat{z}^{(i,\beta)}$.
  First note that from \cref{thm_rlt_elementary}~(\ref{eq_mccormick_ij}) we have $y^*_{i,j} \geq x^\star_i + x_j^\star- 1 = 1$ for all $i,j \in N$, and thus $x^\star_i - y^\star_{j,i} = 0$ for all $i,j \in N$.
  Second, from \cref{thm_rlt_elementary}~(\ref{eq_mccormick_j}) it follows that $\bar{x}_i^{(\ell)} \geq \bar{y}_{j,i}^{(\ell)}$ for all $i,j,\ell \in N$.
  Consequently, for any $i\in \Bfrac(\hat{x})$,
  \begin{multline*}
    \hat{x}_i - \hat{y}_{j(i),i}
    = (1 - |D| \varepsilon) \cdot (x^*_i - y^*_{j(i),i)}) + \varepsilon \cdot \sum_{\ell \in N} \left(\bar{x}_i^{(\ell)} - \bar{y}_{j(i),i}^{(\ell)} \right) \\
    \geq 0 + \varepsilon \cdot \left(\bar{x}_i^{(i)} - \bar{y}_{j(i),i}^{(i)}\right)
    \geq \varepsilon \cdot \left(\bar{x}_i^{(i)} - \bar{x}_{j(i)}^{(i)}\right)
    = \varepsilon.
  \end{multline*}
  Recall that by \cref{rem_rlt_calc_z}, $ \hat{z}^{(i,0)} = \tfrac{\hat{x}-\hat{y}_{\star,i}}{1-\hat{x}_i} $ and $ \hat{z}^{(i,1)} = \tfrac{\hat{y}_{\star,i}}{\hat{x}_i} $ hold.
  Taking coordinate $j(i)$, we get
  \begin{equation*}
    \hat{z}^{(i,1)}_{j(i)} = \frac{\hat{y}_{j(i),i}}{\hat{x}_i} \leq \frac{\hat{y}_{j(i),i}}{\hat{y}_{j(i),i} + \varepsilon} \leq \frac{1}{1+\varepsilon}.
  \end{equation*}
  From $\hat{z}^{(i,\beta)} \in P(0)$ we know that $0 \leq \hat{z}^{(i,\beta)}_j \leq 1$ holds for all $(i,\beta) \in \Bfrac(\hat{x}) \times \{0,1\}$ and all $j \in N$.
  Now let $\alpha \coloneqq \tfrac{\varepsilon}{1+\varepsilon}$.
  We obtain
  \begin{align*}
    \sum_{j \in N} \hat{z}^{(i,1)}_{j}
    &\leq (|N| - 1) + \hat{z}^{(i,1)}_{j(i)}
    \leq |N| - \left(1 - \frac{1}{1 + \varepsilon} \right)
    = |N| - \alpha \text{ and} \\
    \sum_{j \in N} \hat{z}^{(i,0)}_j
    &\leq (|D| - 1) + \hat{z}^{(i,0)}_i
    = |D| - 1
    \leq |D| - \alpha
  \end{align*}
  and thus $\hat{z}^{(i,0)},\hat{z}^{(i,1)} \in P(\alpha)$ for all $i \in \Bfrac(\hat{x})$.
  Now conditions (\ref{eq_rlt_extra1})--(\ref{eq_rlt_fixed}) from \cref{thm_rlt_characterization} are satisfied for the polyhedron $P(\alpha)$ and $\hat{x}$, using $\hat{z}^{(i,\beta)}$ for all $i \in N$ and $\beta = 0,1$. 
  Therefore,
  \begin{equation}
    \hat{x} \in \RLTstrong_B(P(\alpha)). 
    \label{eq_non_dominance_in_rlt}
  \end{equation}
  Now let $\varepsilon < \frac{1}{|N|^2}$.
  Since  $x^\star \notin P(\alpha)$, every point $\bar{x} \in \bar{X} \cap P(\alpha)$ has $\sum_{j \in N} \bar{x}_j \leq |N|-1$.
  Therefore, for every $x' \in \conv\left( \bigcup_{\bar{x} \in \bar{X}} \{ x \in P(\alpha) \mid \sum_{j \in N} x_j = \bar{x} \} \right)$, also $\sum_{j \in N} x'_j \leq |N| - 1$ holds.
  For the point $\hat{x}$, 
  \begin{multline*}
    \sum_{j \in N} \hat{x}_j
    = (1 - |N| \varepsilon) \cdot \sum_{j \in N} x^\star_j + \varepsilon \sum_{j \in N} \sum_{i \in N} x^{(i)}_{j} \\
    \geq (1 - |N|\varepsilon) |N| + 0
    > (1 - \frac{1}{|N|}) |N| = |N| - 1.
  \end{multline*}
  We conclude that
  $
    \hat{x} \notin \conv\left( \bigcup_{\bar{x} \in \bar{X}} \{ x \in P(\alpha) \mid \sum_{j \in N} x_j = \bar{x} \} \right)
    \label{eq_non_dominance_notin_dhull}
  $
  holds, which together with~\eqref{eq_non_dominance_in_rlt} completes the proof.
  \qed
\end{proof}

Since the counter-example in the proof of \cref{thm_rlt_dominates_disjunctions} satisfies $B = N$, we obtain the following necessary condition for dominance of $\RLTweak_B(P)$.

\begin{corollary}
  \label{thm_rlt_dominates_disjunctions_weak}
  Let $D \subseteq B \subseteq N$ and let $\bar{X} \subseteq \{0,1\}^D$.
  If
  \[
    \RLTweak(P) \subseteq \conv\left( \bigcup_{\bar{x} \in \bar{X}} \{ x \in P \mid x_{D} = \bar{x} \} \right)
  \]
  holds for all polyhedra $P \subseteq \conv(\bar{X}) \times [0,1]^{N \setminus D}$ then for every $\bar{x} \in \bar{X}$ there exists a $j(\bar{x}) \in D$ such that $\bar{X} \cap F^{(j(\bar{x}),\beta)}  = \{ \bar{x} \}$ for $\beta = \bar{x}_{j(\bar{x})}$.
\end{corollary}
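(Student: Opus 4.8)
The plan is to obtain this statement as a short consequence of \cref{thm_rlt_dominates_disjunctions}, exploiting only that $\RLTweak_B(\cdot)$ is the \emph{larger} of the two closures. By \cref{thm_rlt_elementary}~\ref{thm_rlt_elementary_subset} we have $\RLTstrong_B(P) \subseteq \RLTweak_B(P)$ for every polyhedron $P$. Hence, if the assumed containment $\RLTweak_B(P) \subseteq \conv\left( \bigcup_{\bar{x} \in \bar{X}} \{ x \in P \mid x_D = \bar{x} \} \right)$ holds for all admissible $P$, then a fortiori $\RLTstrong_B(P)$ is contained in the very same disjunctive hull for all such $P$. This is precisely the dominance statement on the left-hand side of the equivalence in \cref{thm_rlt_dominates_disjunctions}, whose ``only if'' direction then immediately yields the claimed combinatorial condition on $\bar{X}$.

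Equivalently — and matching the way the counter-example was built — I would argue by contraposition. Suppose the combinatorial condition fails. The converse part of the proof of \cref{thm_rlt_dominates_disjunctions} then produces a polyhedron $P(\alpha)$ together with a point $\hat{x} \in \RLTstrong_B(P(\alpha))$ lying outside the disjunctive hull. Since that construction is carried out with $B = N$ (in fact $B = N = D$), \cref{thm_rlt_boundary} gives $\RLTstrong_B(P(\alpha)) = \RLTweak_B(P(\alpha))$, so the same $\hat{x}$ certifies $\RLTweak_B(P(\alpha)) \not\subseteq \conv\left( \bigcup_{\bar{x} \in \bar{X}} \{ x \in P(\alpha) \mid x_D = \bar{x} \} \right)$, i.e.\ weak dominance fails. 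The appeal to $B = N$ is convenient but not essential: the inclusion $\hat{x} \in \RLTstrong_B \subseteq \RLTweak_B$ already does the job, which is exactly why the weaker of the two closures suffices to inherit the necessary condition.

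The one point I would treat with care is the quantifier over $P$ in the ambient space $\R^N$. \Cref{thm_rlt_dominates_disjunctions} fixes $D \subseteq B \subseteq N$ but reduces its counter-example to the slice $N = B = D$, so I would verify that this reduction transfers cleanly. Concretely, I would embed the $\R^D$-counter-example into $\R^N$ by fixing the coordinates in $N \setminus D$ to $0$ and observe that the McCormick bounds of \cref{thm_rlt_elementary}~\ref{thm_rlt_elementary_mccormick} force the $y$-entries in every column $k \in B \setminus D$ to vanish; padding the certificate accordingly keeps $\hat{x}$ inside $\RLTweak_B(P)$ while the product structure of $P$ leaves it outside the disjunctive hull. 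Since this embedding is already implicit in \cref{thm_rlt_dominates_disjunctions}, I do not expect a genuine obstacle here: the entire content of the statement is the monotone relation $\RLTstrong_B \subseteq \RLTweak_B$ (sharpened to equality at $B = N$) combined with the theorem, and everything else is inherited verbatim.
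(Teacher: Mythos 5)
Your proposal is correct, and your preferred main line is a mildly different (and cleaner) logical packaging of what the paper does. The paper's entire proof is the remark that the counter-example constructed in the converse direction of \cref{thm_rlt_dominates_disjunctions} satisfies $B = N$, so that $\RLTstrong_B(P(\alpha)) = \RLTweak_B(P(\alpha))$ there and the point $\hat{x}$ also certifies failure of weak dominance --- this is precisely your second paragraph. Your first paragraph instead uses \cref{thm_rlt_dominates_disjunctions} as a black box: by \cref{thm_rlt_elementary}~\ref{thm_rlt_elementary_subset} we have $\RLTstrong_B(P) \subseteq \RLTweak_B(P)$, so the corollary's hypothesis implies the left-hand side of the theorem's equivalence for every admissible $P$, and the ``only if'' direction of the theorem delivers the condition on $\bar{X}$. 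This buys two things: it needs only the inclusion of the closures rather than their equality at $B = N$ (so \cref{thm_rlt_boundary} is not needed at all), and it does not require inspecting the internals of the theorem's proof. Your observation that the appeal to $B = N$ is dispensable even in the contrapositive version is also right, since $\hat{x} \in \RLTstrong_B(P(\alpha)) \subseteq \RLTweak_B(P(\alpha))$ already does the job. Finally, your caution about the quantifier over $P$ --- the theorem's counter-example is built after setting $B = N = D$, whereas the statement quantifies over polyhedra in $\R^N$ --- points at a gap in the paper's own exposition that is shared equally by the theorem and the corollary, so it does not disadvantage your derivation; your padding argument (fixing $x_k = 0$ for $k \in N \setminus D$, whence the McCormick inequalities force the corresponding columns of $y$ to vanish) closes it correctly.
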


\section{Conclusions}
\label{sec_conclusions}

In this paper we provided characterization for when a certain point lies in the RLT closure $\RLTweak_B(\cdot)$ in a way that is similar to the geometric interpretation of lift-and-projection together with an extra condition.
While this led to new insights such as \cref{thm_rlt_boundary} and turned out to be useful for other results, we believe that it does not yield complete understanding of the geometry of RLT.
In fact, several questions remain open; in particular it is unclear when multiplication with (complements of) continuous variables adds anything to the strength of the relaxation.

\begin{question}
  For which polyhedra $P \subseteq \R^N$ and $B \subsetneqq N$ is $\RLTstrong_B(P) \subsetneqq \RLTweak_B(P)$?
\end{question}

In \cref{sec_qap} we have shown that the quadratic-assignment-problem formulation~\eqref{model_qap_aj} by Adams and Johnson~\cite{AdamsJ94} dominates the disjunctive hull associated to the cardinality equations of the assignment polytope.
Later in \cref{sec_dominance} we showed a similar result for a general RLT closure.
However, since~\eqref{model_qap_aj} does not arise from an actual formulation of the QAP (but just from the assignment problem's reformulation), \cref{thmQAPdominanceColsRows} is not a consequence of \cref{thm_rlt_dominates_disjunctions}.
Most likely it is possible to generalize our results to also cover such cases, but this may introduce more technicalities, and is thus beyond the scope of this paper.
Despite the dominance of formulations of the form~\eqref{model_qap_cols_rows} by~\eqref{model_qap_aj}, evaluating such formulations in practice remains interesting since decomposition approaches (like Benders' decomposition~\cite{Benders62}) may benefit from the fact that the generation of inequalities~\eqref{eq_qap_x_w_cols_rows} is independent for each $j$ or $i$.

Also our results from \cref{sec_dominance} give rise to new research questions.
The first is on the importance of the requirement $P \subseteq \conv(\bar{X}) \times [0,1]^{N \setminus D}$ for the considered polyhedra $P$, while the second is about sufficiency in \cref{thm_rlt_dominates_disjunctions_weak}.

\begin{question}
  For $\bar{X} = \{ x \in \{0,1\}^D \mid \sum_{j \in D} x_j = 1 \}$, for which polyhedra $P \subseteq \R^N$ whose projection $P' \subseteq \R^D$ on the $x_D$-variables satisfies $P' \cap \Z^D = \bar{X}$ does~\eqref{eq_rltstrong_dominates_disjunctions} hold.
\end{question}

\begin{question}
  Are the conditions in \cref{thm_rlt_dominates_disjunctions_weak} also sufficient for dominance?
\end{question}

The most natural follow-up problem is, in our view, that for a generalization to higher levels of the RLT hierarchy~\cite{SheraliA99}.

\begin{question}
  Which disjunctions are implied by the $k$-th level RLT closure for $k > 1$?
\end{question}

\bibliographystyle{splncs04}
\bibliography{rlt_geometry_disjunctive}

@Article{AdamsS86,
  author   = {Adams, Warren P. and Sherali, Hanif D.},
  title    = {{A Tight Linearization and an Algorithm for Zero-One Quadratic Programming Problems}},
  doi      = {10.1287/mnsc.32.10.1274},
  number   = {10},
  pages    = {1274--1290},
  volume   = {32},
  abstract = {This paper is concerned with the solution of linearly constrained
	zero-one quadratic programming problems. Problems of this kind arise
	in numerous economic, location decision, and strategic planning situations,
	including capital budgeting, facility location, quadratic assignment,
	media selection, and dynamic set covering. A new linearization technique
	is presented for this problem which is demonstrated to yield a tighter
	continuous or linear programming relaxation than is available through
	other methods. An implicit enumeration algorithm which uses Lagrangian
	relaxation, Benders' cutting planes, and local explorations is designed
	to exploit the strength of this linearization. Computational experience
	is provided to demonstrate the usefulness of the proposed linearization
	and algorithm.},
  comment  = {cite for relaxation-linearization technique (RLT)},
  file     = {AdamsS86.pdf:files/AdamsS86.pdf:PDF},
  journal  = {Management Science},
  year     = {1986},
}

@Article{FriezeY83,
  author    = {Frieze, Alan M. and Yadegar, Joseph},
  journal   = {Discrete applied mathematics},
  title     = {On the quadratic assignment problem},
  year      = {1983},
  number    = {1},
  pages     = {89--98},
  volume    = {5},
  abstract  = {We discuss the relationship between Gilmore–Lawler lower bounds with decomposition for the quadratic assignment problem and a lagrangean relaxation of a particular integer programming formulation},
  doi       = {10.1016/0166-218X(83)90018-5},
  file      = {:files/FriezeY83.pdf:PDF},
  publisher = {North-Holland},
}

@Article{KaufmanB78,
  author   = {Kaufman, Léonard and Broeckx, Fernand C.M.},
  journal  = {European Journal of Operational Research},
  title    = {An algorithm for the quadratic assignment problem using Bender's decomposition},
  year     = {1978},
  issn     = {0377-2217},
  number   = {3},
  pages    = {207-211},
  volume   = {2},
  abstract = {A well known method used for solving quadratic assignment problems proceeds by the construction of an equivalent much larger linear assignment problem with many side constraints. The disadvantage of this method lies in the weakness of the bounds obtained by solving the linear problem. An alternate linearization has been suggested using a general method of Glover. In this paper the mixed integer program obtained by Glover's method is discussed and a solution using Bender's decomposition is proposed.},
  doi      = {10.1016/0377-2217(78)90095-4},
  file     = {:files/KaufmanB78.pdf:PDF},
}

@Article{XiaY06,
  author    = {Xia, Yong and Yuan, Ya-Xiang},
  journal   = {Optimization Methods and Software},
  title     = {A new linearization method for quadratic assignment problems},
  year      = {2006},
  number    = {5},
  pages     = {805--818},
  volume    = {21},
  doi       = {10.1080/10556780500273077},
  file      = {:files/XiaY06.pdf:PDF},
  publisher = {Taylor \& Francis},
}

@Article{Benders62,
  author  = {Benders, Jacques F.},
  title   = {Partitioning procedures for solving mixed-variables programming problems},
  doi     = {10.1007/BF01386316},
  issn    = {0945-3245},
  number  = {1},
  pages   = {238--252},
  volume  = {4},
  comment = {cite for Benders' decomposition},
  day     = {01},
  file    = {Benders62.pdf:files/Benders62.pdf:PDF},
  journal = {Numerische Mathematik},
  year    = {1962},
}

@Inbook{SheraliA99,
author="Sherali, Hanif D.
and Adams, Warren P.",
title="{RLT Hierarchy for General Discrete Mixed-Integer Problems}",
bookTitle="A Reformulation-Linearization Technique for Solving Discrete and Continuous Nonconvex Problems",
year="1999",
publisher="Springer US",
address="Boston, MA",
pages="103--129",
abstract="Thus far, we have been focusing on 0--1 mixed-integer programming problems and have developed a general theory for generating a hierarchy of tight relaxations leading to the convex hull representation. However, in many applications, we encounter problems in which the decision variables are required to take on more general discrete or integer values as opposed to simply zero or one values. Of particular interest in this context are classes of problems involving variables that are restricted to taken on only a few discrete values, that are possibly not even consecutive integers, for which one might expect to have weak ordinary linear programming relaxations. In this chapter, we demonstrate how one can generate a hierarchy of relaxations leading to the convex hull representation for such general bounded variable discrete optimization problems.",
isbn="978-1-4757-4388-3",
doi="10.1007/978-1-4757-4388-3_4"
}

@Misc{HuberR17,
  author = {Huber, Christine and Riedl, Wolfgang F.},
  title  = {{The quadratic assignment problem: the linearization of Xia and Yuan is weaker than the linearization of Adams and Johnson and a family of cuts to narrow the gap}},
  year   = {2017},
  url = {https://arxiv.org/abs/1710.02472},
  eprint = {1710.02472},
}

@inproceedings{AdamsJ94,
  author    = {Adams, Warren P. and Johnson, Terri A.},
  title     = {Improved linear programming based lower bounds for the quadratic assignment problem},
  booktitle = {Quadratic Assignment and Related Problems},
  year      = {1994},
  pages     = {43--77},
  volume    = {16},
  comment   = {Proved equivalence to FYL for symmetric objective functions},
  doi       = {10.1090/dimacs/016},
  editor    = {Pardalos, Panos and Wolkowicz, Henry},
  file      = {:files/AdamsJ94.pdf:PDF},
  publisher = {Springer-Verlag},
  series    = {DIMACS Series in Discrete Mathematics and Theoretical Computer Science},
  subtitle  = {Quadratic Assignment and Related Problems}
}

@Article{AdamsS15,
  author   = {Adams, Warren P. and Sherali, Hanif D.},
  journal  = {Optimization Letters},
  title    = {{RLT insights into lift-and-project closures} },
  year     = {2015},
  issn     = {1862-4480},
  month    = {Jan},
  number   = {1},
  pages    = {19--39},
  volume   = {9},
  abstract = {This paper examines relationships between the two methods for constructing polyhedral outer-approximations of mixed 0-1 linear programs known in the literature as the level-1 Reformulation--Linearization-Technique (RLT) and the elementary closure of lift-and-project (L{\&}P) cuts based on the level-1 L{\&}P. The latter can be obtained, via a transformation of variables, as a relaxed version of the former through the removal of constraints. Whereas both approaches derive formulations in lifted spaces by scaling problem constraints with suitable multipliers, a key difference is that the multipliers used by the RLT are defined in terms of the binary variables of the given problem. As a consequence, these multipliers, in contrast to those used in the L{\&}P, portend important advantages, including the ability to handle nonlinear expressions, to recognize redundant inequalities, and to exploit special structures. This paper compares the two methods, and uses RLT constructs to provide insights into the L{\&}P approach. Some particular insights are illustrated on the quadratic assignment and MAX-2SAT problems, with level-2 RLT and level-2 L{\&}P comparisons also given for the latter problem.},
  comment  = {cite for why RLT1 is stronger than L&P due to identification of y_ij = y_ji (variables from different disjunctions)},
  day      = {01},
  doi      = {10.1007/s11590-014-0763-5},
  file     = {:files/AdamsS15.pdf:PDF},
}

@InProceedings{BestuzhevaGA23,
  author    = {Bestuzheva, Ksenia and Gleixner, Ambros and Achterberg, Tobias},
  booktitle = {Integer Programming and Combinatorial Optimization},
  title     = {{Efficient Separation of RLT Cuts for Implicit and Explicit Bilinear Products}},
  year      = {2023},
  address   = {Cham},
  editor    = {Del Pia, Alberto and Kaibel, Volker},
  pages     = {14--28},
  publisher = {Springer International Publishing},
  abstract  = {The reformulation-linearization technique (RLT) is a prominent approach
	to constructing tight linear relaxations of non-convex continuous
	and mixed-integer optimization problems. The goal of this paper is
	to extend the applicability and improve the performance of RLT for
	bilinear product relations. First, a method for detecting bilinear
	product relations implicitly contained in mixed-integer linear programs
	is developed based on analyzing linear constraints with binary variables,
	thus enabling the application of bilinear RLT to a new class of problems.
	Our second contribution addresses the high computational cost of
	RLT cut separation, which presents one of the major difficulties
	in applying RLT efficiently in practice. We propose a new RLT cutting
	plane separation algorithm which identifies combinations of linear
	constraints and bound factors that are expected to yield an inequality
	that is violated by the current relaxation solution. A detailed computational
	study based on implementations in two solvers evaluates the performance
	impact of the proposed methods.},
  doi       = {10.1007/978-3-031-32726-1_2},
  file      = {BestuzhevaGA23.pdf:files/BestuzhevaGA23.pdf:PDF},
  isbn      = {978-3-031-32726-1},
  timestamp = {2023.12.11},
}

@Article{AdamsS90,
  author   = {Adams, Warren P. and Sherali, Hanif D.},
  title    = {{Linearization Strategies for a Class of Zero-One Mixed Integer Programming Problems}},
  doi      = {10.1287/opre.38.2.217},
  number   = {2},
  pages    = {217--226},
  volume   = {38},
  abstract = {This paper is concerned with a new linearization strategy for a class
	of zero-one mixed integer programming problems that contains quadratic
	cross-product terms between continuous and binary variables, and
	between the binary variables themselves. This linearization scheme
	provides an equivalent mixed integer linear programming problem which
	yields a tighter continuous relaxation than that obtainable via the
	alternative linearization techniques available in the literature.
	Moreover, the proposed technique provides a unifying framework in
	the sense that all the alternate methods lead to formulations that
	are accessible through appropriate surrogates of the constraints
	of the new linearized formulation. Extensions to various other types
	of mixed integer nonlinear programming problems are also discussed.},
  comment  = {cite for relaxation-linearization technique (RLT)},
  file     = {AdamsS90.pdf:files/AdamsS90.pdf:PDF},
  journal  = {Operations Research},
  year     = {1990},
}

@Article{SheraliT92,
  author   = {Sherali, Hanif D. and Tuncbilek, Cihan H.},
  title    = {A global optimization algorithm for polynomial programming problems using a Reformulation-Linearization Technique},
  doi      = {10.1007/BF00121304},
  issn     = {1573-2916},
  number   = {1},
  pages    = {101--112},
  volume   = {2},
  abstract = {This paper is concerned with the development of an algorithm to solve
	continuous polynomial programming problems for which the objective
	function and the constraints are specified polynomials. A linear
	programming relaxation is derived for the problem based on a Reformulation
	Linearization Technique (RLT), which generates nonlinear (polynomial)
	implied constraints to be included in the original problem, and subsequently
	linearizes the resulting problem by defining new variables, one for
	each distinct polynomial term. This construct is then used to obtain
	lower bounds in the context of a proposed branch and bound scheme,
	which is proven to converge to a global optimal solution. A numerical
	example is presented to illustrate the proposed algorithm.},
  comment  = {cite for relaxation-linearization technique (RLT) applied to polynomial
	optimization},
  file     = {SheraliT92.pdf:files/SheraliT92.pdf:PDF},
  journal  = {Journal of Global Optimization},
  month    = {Mar},
  year     = {1992},
}

@ARTICLE{BalasCC93,
  author = {Balas, Egon and Ceria, Sebasti{\'a}n and Cornu{\'e}jols, G{\'e}rard},
  title = {A lift-and-project cutting plane algorithm for mixed 0-1 programs},
  journal = {Mathematical Programming, Series B},
  year = {1993},
  volume = {58},
  doi = {10.1007/BF01581273},
  pages = {295--324},
  file = {BalasCC93.pdf:files/BalasCC93.pdf:PDF},
  timestamp = {2011.07.10}
}

@Article{BalasCC96,
  author    = {Balas, Egon and Ceria, Sebasti{\'a}n and Cornu{\'e}jols, G{\'e}rard},
  journal   = {Management Science},
  title     = {Mixed 0-1 programming by lift-and-project in a branch-and-cut framework},
  year      = {1996},
  doi = {10.1287/mnsc.42.9.1229},
  pages     = {1229--1246},
  volume    = {42},
  acmid     = {239499},
  address   = {Institute for Operations Research and the Management Sciences (INFORMS), Linthicum, Maryland, USA},
  file      = {BalasCC96.pdf:files/BalasCC96.pdf:PDF},
  issue     = {9},
  numpages  = {18},
  publisher = {INFORMS},
  timestamp = {2011.07.10},
}

@TechReport{Balas74,
  author    = {Balas, Egon},
  title     = {Disjunctive programming: Properties of the convex hull of feasible points},
  year      = {1974},
  number    = {348},
  doi = {10.1137/0606047},
  type      = {MSRR},
  file      = {Balas74.pdf:files/Balas74.pdf:PDF},
  school    = {Carnegie Mellon University},
  timestamp = {2011.08.05},
}

@INCOLLECTION{Balas79,
  author = {Balas, Egon},
  title = {Disjunctive Programming},
  booktitle = {Discrete Optimization II},
  publisher = {Elsevier},
  year = {1979},
  editor = {Hammer, Peter L. and Johnson, Ellis L. and Korte, Bernhard H.},
  volume = {5},
  series = {Annals of Discrete Mathematics},
  pages = {3--51},
  abstract = {This chapter reviews several recent developments in the convex analysis
	approach to integer programming. These developments are based on
	viewing integer programs as disjunctive programs---that is, linear
	programs with disjunctive constraints. Apart from the fact that this
	is the most natural and straightforward way of stating many problems
	involving logical conditions (dichotomies and implications), the
	disjunctive programming approach seems to be significant for zero-one
	programming, both theoretically and practically. On the theoretical
	side, it provides neat structural characterizations, which offer
	new insights. On the practical side, it produces a variety of cutting
	planes with desirable properties and offers several ways of combining
	cutting planes with branch and bound. The chapter also presents linear
	(or nonlinear) programs with disjunctive constraints. The main conceptual
	tool used in studying the structural properties of disjunctive programs
	is polarity. A linear programming characterization of the convex
	hull of feasible points of a disjunctive program is also discussed.},
  doi = {10.1016/S0167-5060(08)70342-X},
  file = {Balas79.pdf:files/Balas79.pdf:PDF},
  issn = {0167-5060}
}

@Article{Fortet60a,
  author  = {Fortet, Robert},
  title   = {Applications de l'algebre de Boole en recherche op{\'e}rationelle},
  number  = {14},
  pages   = {17--26},
  volume  = {4},
  comment = {cite for standard linearization of polynomial binary optimization},
  journal = {Revue Fran{\c{c}}aise de Recherche Op{\'e}rationelle},
  year    = {1960},
}

@Article{Fortet60b,
  author        = {Fortet, Robert},
  title         = {L'algebre de Boole et ses applications en recherche operationnelle},
  doi           = {10.1007/BF03006558},
  pages         = {17--26},
  volume        = {4},
  comment       = {cite for standard linearization of polynomial binary optimization},
  date-added    = {2017-07-07 17:21:46 +0000},
  date-modified = {2017-07-07 17:21:46 +0000},
  file          = {Fortet60.pdf:files/Fortet60.pdf:PDF},
  journal       = {Trabajos de Estadistica},
  year          = {1960},
}

@Article{GloverW73,
  author   = {Glover, Fred and Woolsey, Eugene},
  title    = {Further Reduction of Zero-One Polynomial Programming Problems to Zero-One linear Programming Problems},
  doi      = {10.1287/opre.21.1.156},
  number   = {1},
  pages    = {156--161},
  volume   = {21},
  abstract = {This paper gives rules that enable the transformation of a 0-1 polynomial
	programming problem into a 0-1 linear programming problem to be effected
	with reduced numbers of constraints. Rules are also given that provide
	reduced numbers of variables when the true variables of interest
	are not individual cross-product terms, but sums of such terms or
	polynomials of the form (���xj)p.},
  comment  = {cite for standard linearization of multilinear optimization},
  file     = {GloverW73.pdf:files/GloverW73.pdf:PDF},
  journal  = {Operations Research},
  year     = {1973},
}

@Article{GloverW74,
  author   = {Glover, Fred and Woolsey, Eugene},
  title    = {{Converting the 0-1 Polynomial Programming Problem to a 0-1 Linear Program}},
  doi      = {10.1287/opre.22.1.180},
  number   = {1},
  pages    = {180-182},
  volume   = {22},
  abstract = {Rules are given that permit 0-1 polynomial programming problems to
	be converted to 0-1 linear programming problems in a manner that
	replaces cross-product terms by continuous rather than integer variables.
	Since the difficulty of mixed integer programming problems often
	depends more strongly on the number of integer variables than on
	the number of continuous variables, such rules are expected to have
	advantages in practical applications. In addition, the continuous
	variables automatically receive integer values, and hence our formulation
	can also be exploited by methods designed to take advantage of a
	pure integer structure.},
  comment  = {cite for standard linearization of multilinear optimization},
  file     = {GloverW74.pdf:files/GloverW74.pdf:PDF},
  journal  = {Operations Research},
  year     = {1974},
}

@Article{ZhangBM13,
  author   = {Zhang, Huizhen and Beltran-Royo, Cesar and Ma, Liang},
  journal  = {Annals of Operations Research},
  title    = {Solving the quadratic assignment problem by means of general purpose mixed integer linear programming solvers},
  year     = {2013},
  issn     = {1572-9338},
  month    = {Aug},
  number   = {1},
  pages    = {261--278},
  volume   = {207},
  abstract = {The Quadratic Assignment Problem (QAP) can be solved by linearization, where one formulates the QAP as a mixed integer linear programming (MILP) problem. On the one hand, most of these linearizations are tight, but rarely exploited within a reasonable computing time because of their size. On the other hand, Kaufman and Broeckx formulation (Eur. J. Oper. Res. 2(3):204--211, 1978) is the smallest of these linearizations, but very weak. In this paper, we analyze how the Kaufman and Broeckx formulation can be tightened to obtain better QAP-MILP formulations. As shown in our numerical experiments, these tightened formulations remain small but computationally effective to solve the QAP by means of general purpose MILP solvers.},
  day      = {01},
  doi      = {10.1007/s10479-012-1079-4},
  file     = {:ZhangBM13.pdf:PDF},
}

@Article{Neumann53,
  author    = {von Neumann, John},
  journal   = {{Contributions to the Theory of Games (AM-28), Volume II}},
  title     = {A Certain Zero-sum Two-person Game Equivalent to the Optimal Assignment Problem},
  year      = {1953},
  pages     = {5--12},
  publisher = {Princeton University Press},
}

@Article{Birkhoff46,
  author  = {Birkhoff, Garrett},
  journal = {Reuista Facultad de Ciencias Exactas, Puras y Aplicadas Unioersidad Nacional de Tucuman, Series A},
  title   = {Tres observaciones sobre el algebra lineal},
  year    = {1946},
  pages   = {147--154},
  volume  = {5},
}

@Article{McCormick76,
  author   = {McCormick, Garth P.},
  title    = {Computability of global solutions to factorable nonconvex programs: Part I --- Convex underestimating problems},
  doi      = {10.1007/BF01580665},
  issn     = {1436-4646},
  number   = {1},
  pages    = {147--175},
  volume   = {10},
  abstract = {For nonlinear programming problems which are factorable, a computable
	procedure for obtaining tight underestimating convex programs is
	presented. This is used to exclude from consideration regions where
	the global minimizer cannot exist.},
  comment  = {cite for integrality of McCormick relaxation in the bilinear case
	for polynomial optimization
	cite for factorable programming},
  day      = {01},
  file     = {McCormick76.pdf:files/McCormick76.pdf:PDF},
  journal  = {Mathematical Programming},
  year     = {1976},
}

@Article{KoopmansB57,
  author    = {Koopmans, Tjalling C. and Beckmann, Martin},
  journal   = {Econometrica},
  title     = {Assignment Problems and the Location of Economic Activities},
  year      = {1957},
  issn      = {00129682, 14680262},
  number    = {1},
  pages     = {53--76},
  volume    = {25},
  abstract  = {Two problems in the allocation of indivisible resources are discussed. Both can be interpreted as problems of assigning plants to locations. The first problem, in which cost of transportation between plants is ignored, is found to be a linear programming problem, with which is associated a system of rents that sustains an optimal assignment. The recognition of cost of interplant transportation in the second problem introduces complications which call for more laborious and largely unexplored computations and which also appear to defeat the price system as a means of sustaining an optimal assignment.},
  doi       = {10.2307/1907742},
  file      = {:files/KoopmansB57.pdf:PDF},
  publisher = {[Wiley, Econometric Society]},
  urldate   = {2025-10-09},
}

@article{ForresterAH10,
author = {Forrester, Richard J. and Adams, Warren P. and Hadavas, Paul T.},
title = {{Concise RLT forms of binary programs: A computational study of the quadratic knapsack problem}},
journal = {Naval Research Logistics (NRL)},
volume = {57},
number = {1},
pages = {1-12},
keywords = {0–1 quadratic knapsack problem, linearization, reformulation-linearization technique (RLT)},
doi = {10.1002/nav.20364},
eprint = {https://onlinelibrary.wiley.com/doi/pdf/10.1002/nav.20364},
abstract = {Abstract The reformulation-linearization technique (RLT) is a methodology for constructing tight linear programming relaxations of mixed discrete problems. A key construct is the multiplication of “product factors” of the discrete variables with problem constraints to form polynomial restrictions, which are subsequently linearized. For special problem forms, the structure of these linearized constraints tends to suggest that certain classes may be more beneficial than others. We examine the usefulness of subsets of constraints for a family of 0–1 quadratic multidimensional knapsack programs and perform extensive computational tests on a classical special case known as the 0–1 quadratic knapsack problem. We consider RLT forms both with and without these inequalities, and their comparisons with linearizations derived from published methods. Interestingly, the computational results depend in part upon the commercial software used. © 2009 Wiley Periodicals, Inc. Naval Research Logistics, 2010},
year = {2010}
}

@article{QiuY24,
author = {Qiu, Yuzhou and Yildirim, E. Alper},
year = {2024},
month = {05},
pages = {293-322},
title = {{On exact and inexact RLT and SDP-RLT relaxations of quadratic programs with box constraints}},
volume = {90},
journal = {Journal of Global Optimization},
doi = {10.1007/s10898-024-01407-y}
}

\end{document}